\newcommand{\op}[1]{\prescript{o}{}{#1}}
\newcommand{\pp}[1]{\prescript{p}{}{#1}}
\newcommand{\midb}{\;\middle|\;}
\newcommand{\one}{\mathbbm 1}
\def\reals{\mathbb{R}}
\def\gph{\mathop{\rm gph}\nolimits}
\def\comp{\raise 1pt \hbox{$\scriptstyle\circ$}}
\def\esssup{\mathop{\rm ess\ sup}\nolimits}
\def\dom{\mathop{\rm dom}\nolimits}
\def\rge{\mathop{\rm rge}}
\def\Var{\mathop{\rm Var}\nolimits}
\def\upto{{\raise 1pt \hbox{$\scriptstyle \,\nearrow\,$}}}
\def\downto{{\raise 1pt \hbox{$\scriptstyle \,\searrow\,$}}}
\def\cl{\mathop{\rm cl}\nolimits}
\def\FF{(\F_t)_{t\ge 0}}
\def\B{{\cal B}}
\def\D{{\cal D}}
\def\F{{\cal F}}
\def\L{{\cal L}}
\def\M{{\cal M}}
\def\N{{\cal N}}
\def\P{{\cal P}}
\def\R{{\cal R}}
\def\T{{\cal T}}
\def\U{{\cal U}}
\def\Y{{\cal Y}}
\newtheorem{theorem}{Theorem}
\newtheorem{lemma}[theorem]{Lemma}
\newtheorem{corollary}[theorem]{Corollary}
\newtheorem{example}[theorem]{Example}
\newtheorem{remark}{Remark}
\theoremstyle{definition}
\begin{document}
\title{Dual spaces of cadlag processes}

\author{Teemu Pennanen\thanks{Department of Mathematics, King's College London, Strand, London, WC2R 2LS, United Kingdom, teemu.pennanen@kcl.ac.uk} \and Ari-Pekka Perkki\"o\thanks{Mathematics Institute, Ludwig-Maximilian University of Munich, Theresienstr. 39, 80333 Munich, Germany, a.perkkioe@lmu.de}}

\maketitle

\begin{abstract}
This article characterizes topological duals of spaces of cadlag processes. 
We obtain extensions of functional analytic results of Dellacherie and Meyer that underlie many fundamental results in stochastic analysis. In particular, we obtain a characterization of the dual of cadlag processes of class $(D)$ in terms of optional measures of essentially bounded variation. When specialized to regular processes, we find extensions of the main result of Bismut \cite{bis78} on projections of continuous processes. The dual characterizations yield existence results and optimality conditions for many fundamental problems from optimal stopping to singular stochastic control well beyond classical formulations.
\end{abstract}

\noindent\textbf{Keywords.} Banach function space, stochastic process, topological dual, optional projection
\newline
\newline
\noindent\textbf{AMS subject classification codes.} 46N30, 60G07, 46E30

\section{Introduction}

Many fundamental results in the theory of stochastic processes are based on characterizations of the topological dual of a space of stochastic processes. For example, the Doob decomposition of a supermartingale is obtained by identifying it with a continuous linear functional on a space of bounded cadlag processes and then using the characterization of the dual space; see~\cite[Section~VII.1]{dm82}. The existence of Snell envelope is proved similarly; see~\cite[Appendix~I]{dm82}.  Also, Bismut's characterization of regular processes and existence results on optimal stopping are based on duality; see \cite{bis78,bs77} and the references therein. Moreover, duality theory and optimality conditions for general convex stochastic control problems are often derived in a functional analytic framework of paired spaces of stochastic processes; see e.g.\ \cite{bis73b}. To extend such frameworks to singular stochastic control, one needs processes of bounded variation (BV) in separating duality with a space of cadlag processes; see e.g. \cite{pp17}. 

This paper studies Fr\'echet spaces (in particular, Banach spaces) of stochastic processes whose dual can be identified with a space of optional Borel measures (and thus with BV-processes). When dealing with {\em raw} (not necessarily adapted) stochastic processes, the duality is fairly easy to establish. The dual of the space $\Y(D)$ of raw cadlag processes whose pathwise supremum norms belong to a Fr\'echet space $\Y$ of random variables turns out to be a space of pairs of random measures whose pathwise total variation belongs to the dual of $\Y$. When specialized to continuous processes, each dual element can be represented by a single random measure.

The case of adapted cadlag processes is more involved and requires additional techniques. This paper characterizes topological duals of spaces of adapted cadlag processes via functional analysis of the optional projection on the space $\Y(D)$ of raw stochastic processes. Our main results are based on the classical closed range and closed graph theorems which are valid in general Fr\'echet spaces; see e.g.~\cite{kn76}. We find that, as soon as the optional projection is continuous to a Fr\'echet space $\D$ of adapted cadlag processes, its surjectivity is equivalent the topological dual of $\D$ being identifiable with a space of optional random measures. The equivalence is then applied in two directions.

When $\D$ is a subspace of $\Y(D)$, the surjectivity is clear, so we recover \cite[Theorem~VII.65]{dm82}, where $\Y=L^p$ with $p>1$, and its extension \cite[Theorem~3.1]{ara14} where $\Y$ is the Morse heart of an appropriate Orlicz space. We obtain a further extension where $\Y$ is a symmetric (rearrangement invariant) Banach function space whose upper Boyd index is strictly less than one. Banach function spaces are natural extensions of $L^p$ and Orlicz spaces and they have been studied extensively since they were introduced in Luxemburg; see \cite{lux55,lz71,kps82,bs88}. Boyd indices were used e.g.\ in \cite{nov91,kik99} to extend martingale inequalities to Banach function spaces.

In many interesting cases (e.g., when $\Y=L^1$), the optional projection takes a process out of $\Y(D)$. When the polar seminorms of the Fr\'echet space $\Y$ can be expressed as Choquet integrals, we can specify $\D$ so that its dual can be identified directly with a space of optional random measures. The Choquet property holds, in particular, when $\Y=L^1$, so we recover \cite[Theorem~4]{bis78} on the surjectivity of the optional projection of $L^1(D)$ to the space $\D^1$ of cadlag processes of class $(D)$. Our general result then identifies the topological dual of $\D^1$ in terms of optional measures of essentially bounded variation. This characterization seems new. Our arguments are considerably simpler than those of \cite{bis78} and they also cover Marcinkiewicz spaces (see e.g.\ \cite{kps82,bs88}) for which both the surjectivity and the duality results are new.

The main result also gives surjectivity and duality results on general spaces of {\em regular} processes, i.e.\ adapted cadlag processes $y$ of class $(D)$ whose predictable projections coincide with their left limits. In particular, we recover the main result of \cite{bis78} on the surjectivity of the optional projection from the space $L^1(C)$ of integrable continuous processes to the space $\R^1$ of regular processes. As a byproduct, we recover \cite[Theorem~8]{pp17} characterizing the topological dual of $\R^1$ as the space of optional measures of essentially bounded variation. Both results are extended by replacing $L^1$ by a Fr\'echet space $\Y$ as in the main theorems on cadlag processes.

The provided extensions are of interest e.g.\ in singular stochastic control where one optimizes over spaces of optional processes of bounded variation. Our results allow for formulations where the variation need not be essentially bounded. One can then develop dual problems and optimality conditions for convex singular stochastic control much like in \cite{bis73b} in the case of absolutely continuous trajectories. This will be developed in a separate article.

The rest of this paper is organized as follows. Section~\ref{sec:frechet} gives a quick review on topological duals of Fr\'echet spaces of random variables. Section~\ref{sec:raw} characterizes the topological dual of raw (nonadapted) cadlag processes whose pathwise supremum belongs to a Fr\'echet space of random variables. Section~\ref{sec:acp} gives the main result of the paper stating that the dual of a space $\D$ of adapted cadlag processes has the desired structure if and only if the optional projection is a continuous surjection on $\D$. Sections~\ref{sec:doob}, \ref{sec:choquet} and \ref{sec:regular} then derive some fundamental known and new duality results in the theory of stochastic processes. Section~\ref{sec:dd} gives a further application to decomposition of semimartingales.

\section{Fr\'echet lattices of random variables}\label{sec:frechet}

Let $(\Omega,\F,P)$ be a probability space. The duality theory for stochastic processes developed in this paper assumes a Fr\'echet space $\Y$ of random variables whose topological dual can be identified with another space $\U$ of random variables in the sense that every element of the dual can be expressed uniquely as
\[
\langle \xi,\eta\rangle := E(\xi\eta)\quad \xi\in\Y
\]
for some $\eta\in\U$. This section gives a large class of such spaces $\Y$ along with some well-known examples.

Let $\P$ be a countable collection of sublinear symmetric functions on $L^1:=L^1(\Omega,\F,P)$ and define
\[
\tilde \Y:=\bigcap_{p\in\P}\dom p.
\]
We endow $\tilde \Y$ with the locally convex topology generated by $\P$ and assume that each $p\in\P$ satisfies the following:
\begin{enumerate}
\item $p$ is lower semicontinuous on $L^1$,
\item there exists a constant $c$ such that $\frac{1}{c}\|\xi\|_{L^1}\le p(\xi)\le c\|\xi\|_{L^\infty}$ for all $\xi\in L^1$,
\item $p(\xi_1)\le p(\xi_2)$ whenever $|\xi_1|\le|\xi_2|$,
\item $p(\xi^\nu)\downto 0$ whenever $(\xi^\nu)_{\nu=1}^\infty\subset L^\infty$ with $\xi^\nu\downto 0$ almost surely.
\end{enumerate}
We define $\Y$ as the closure of $L^\infty$ in $\tilde \Y$. The above setting covers, in particular, $L^p$ spaces with $p\in[1,\infty)$. Indeed, when $\P=\{\|\cdot\|_{L^p}\}$, we have $\Y=\tilde \Y=L^p$. More interesting examples will be given at the end of this section.

For each $p\in\P$, we define a sublinear symmetric function $p^\circ$ on $L^1$ by
\[
p^\circ(\eta):=\sup_{\xi\in L^\infty}\{E(\xi\eta)\,|\,p(\xi)\le 1\}.
\]
The following is from \cite{pp20}.

\begin{theorem}\label{thm:fl}
The space $\Y$ is Fr\'echet and its dual may be identified with the space 
\[
\U:=\bigcup_{p\in\P}\dom p^\circ
\]
under the bilinear form
\[
\langle \xi,\eta\rangle := E(\xi\eta).
\]
For every $\xi\in L^1$ and $\eta\in L^1$,
\[
E(\xi\eta)\le p(\xi)p^\circ(\eta),
\]
so $p^\circ$ is the polar of $p$ on $\U$. Each $p^\circ$ satisfies 1,2, and 3.
\end{theorem}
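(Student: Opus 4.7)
The plan is to identify $\Y^*$ with $\U$ by restricting continuous functionals on $\Y$ to $L^\infty$ and then applying Lemma~\ref{lem:mackey} together with the Mackey--Arens theorem; the bilinear bound and the properties of $p^\circ$ should fall out from the assumed properties of $p$. As a harmless preliminary reduction I would replace $\P$ by its closure under finite maxima: this is still countable, induces the same topology on $\tilde\Y$, preserves each of properties 1--4, and leaves $\U$ unchanged. I may thus assume $\P$ is upward directed, so that every continuous seminorm on $\tilde\Y$ is dominated by a single member of $\P$. The Fr\'echet property of $\Y$ is then quick: property~2 makes $\P$ separating and sandwiches its topology between those of $L^1$ and $L^\infty$, while a Cauchy net in $\tilde\Y$ has an $L^1$-limit $\xi$, and property~1 forces $\xi\in\tilde\Y$ with convergence in $\tilde\Y$ via $p(\xi-\xi^\mu)\le\liminf_\nu p(\xi^\nu-\xi^\mu)$; then $\Y$ is Fr\'echet as a closed subspace.

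Next, I would establish the bilinear bound $E(\xi\eta)\le p(\xi)p^\circ(\eta)$ for all $\xi,\eta\in L^1$. For $\xi\in L^\infty$ this is immediate from the definition of $p^\circ$ by homogeneity. For general $\xi\in L^1$ with $p(\xi)<\infty$, the plan is to apply the $L^\infty$ case to the truncations $\xi^n:=\mathrm{sign}(\eta)(|\xi|\wedge n)$, use property~3 to get $p(\xi^n)\le p(\xi)$, and pass to the limit via monotone convergence to obtain $E(|\xi\eta|)\le p(\xi)p^\circ(\eta)$. This already shows that each $\eta\in\U$ induces a continuous linear functional on $\Y$ of norm at most $p^\circ(\eta)$, giving the inclusion $\U\subseteq\Y^*$.

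The converse inclusion $\Y^*\subseteq\U$ is the step I expect to be the main obstacle. For $l\in\Y^*$, directedness of $\P$ yields $p\in\P$ and $C>0$ with $|l(\xi)|\le Cp(\xi)$ on $\Y$. The restriction $l|_{L^\infty}$ is thus $p$-continuous, and Lemma~\ref{lem:mackey} upgrades this to $\tau(L^\infty,L^1)$-continuity. The Mackey--Arens theorem then provides a unique $\eta\in L^1$ with $l(\xi)=E(\xi\eta)$ for $\xi\in L^\infty$, and the bound forces $p^\circ(\eta)\le C$, hence $\eta\in\U$. Density of $L^\infty$ in $\Y$, combined with the bilinear bound just established, extends the identity $l(\xi)=E(\xi\eta)$ to all of $\Y$. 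The delicacy lies in chaining Lemma~\ref{lem:mackey}, the Mackey--Arens representation, the extension of the bilinear bound to $L^1\times L^1$, and the density of $L^\infty$, all of which must cooperate.

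The three properties of $p^\circ$ then follow fairly directly. Property~1 is immediate since $p^\circ$ is the supremum of the $L^1$-continuous linear functionals $\eta\mapsto E(\xi\eta)$ over $\xi\in L^\infty$ with $p(\xi)\le 1$. For property~2, property~2 on $p$ gives $\|\xi\|_{L^1}\le c$ whenever $p(\xi)\le 1$, hence $p^\circ(\eta)\le c\|\eta\|_{L^\infty}$; taking $\xi:=\mathrm{sign}(\eta)/c$, which satisfies $p(\xi)\le 1$ by property~2 on $p$, yields $p^\circ(\eta)\ge \|\eta\|_{L^1}/c$. For property~3, if $|\eta_1|\le|\eta_2|$ and $p(\xi)\le 1$, then $\tilde\xi:=|\xi|\,\mathrm{sign}(\eta_2)$ satisfies $p(\tilde\xi)=p(\xi)\le 1$ by property~3 on $p$, so $E(\xi\eta_1)\le E(|\xi||\eta_2|)=E(\tilde\xi\eta_2)\le p^\circ(\eta_2)$; taking the supremum over $\xi$ gives $p^\circ(\eta_1)\le p^\circ(\eta_2)$.
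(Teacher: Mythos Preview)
Your argument is correct and follows essentially the same route as the paper: completeness via $L^1$-limits and property~1, the bilinear bound via truncation and monotone convergence, the dual representation via Lemma~\ref{lem:mackey} and the Mackey topology, and the properties of $p^\circ$ read off directly from those of $p$. Your preliminary passage to a directed $\P$ is a point of extra care the paper glosses over (it simply writes ``there is a $p\in\P$'' as though $\P$ were already directed); note, however, that your claim that this reduction ``leaves $\U$ unchanged'' is not automatic, since $\dom(\max_i p_i)^\circ=\dom p_1^\circ+\cdots+\dom p_n^\circ$, which may strictly contain $\bigcup_i\dom p_i^\circ$.
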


\begin{remark}\label{rem:trunc}
For any $\xi\in\Y$, the pointwise projection $\xi^\nu$ of $\xi$ to $[-\nu,\nu]$ converges to $\xi$ in $\Y$ as $\nu\to\infty$. Indeed, given an $\epsilon>0$, there exists $\bar \xi \in L^\infty$ such that $p(\xi-\bar \xi)<\epsilon$, so, by 3 and 4,  
\[
p(\xi^\nu-\xi)\le p(\one_{|\xi|\ge \nu}\xi ) \le p (\one_{|\xi|\ge \nu}(\xi-\bar\xi) ) + p (\one_{|\xi|\ge \nu}\bar\xi ) \le   p (\xi-\bar\xi) + p (\one_{|\xi|\ge \nu}\bar\xi ) < \epsilon
\]
for $\nu$ large enough. In particular,
\[
\Y=\{\xi\in\tilde \Y \mid \lim_{\nu\to\infty} p(\one_{|\xi|\ge \nu}\xi)= 0\}.
\]
\end{remark}

\begin{example}[Banach function spaces]\label{ex:fbfs}
When $\P$ is a singleton, we are in the setting of Banach function spaces and Theorem~\ref{thm:fl} characterizes the topological dual of $\Y$ as its Koethe dual aka associate space; see e.g.\ \cite{lux55,bs88,kps82}.  
\end{example}

Short proofs of the following (and more) can be found in \cite{pp20}; see also \cite{lux55,kps82,bs88}.

\begin{example}[Orlicz spaces]\label{ex:orl}
Let $\Phi$ be a nonzero nondecreasing finite convex function on $\reals_+$ with $\Phi(0)=0$ and assume that $\P$ contains only the Luxemburg norm
\[
p(\xi) :=\inf\{\beta>0 \mid E\Phi(|\xi|/\beta)\le 1\}.
\]
Then $\tilde\Y$ is the Orlicz space associated with $\Phi$, $p$ satisfies 1-4 and 
\[
\Y=\{\xi\in L^1 \mid E\Phi(|\xi|/\beta)< \infty\quad \forall\beta>0\},
\]
the associated Morse heart. The polar of $p$ can be expressed as
\[
p^\circ(\eta)=\sup_{\xi\in L^\infty}\{E(\eta\xi)\mid E\Phi(\xi)\le 1\} = \inf_{\beta>0}\{\beta E\Phi^*(\eta/\beta)+\beta\}
\]
and, moreover,
\[
\|\eta\|_{\Phi^*}\le p^\circ(\eta)\le 2\|\eta\|_{\Phi^*},
\]
where $\|\cdot\|_{\Phi^*}$ is the Luxemburg norm associated with the conjugate of $\Phi$. Thus, the dual of $\Y$ coincides with the Orlicz space
\[ 
\U=\{\eta\in L^1\mid \exists\beta>0:\ E\Phi^*(\eta/\beta)<\infty\}.
\]
\end{example}

Given $\xi\in L^1$, let
\[
n_\xi(\tau):=E1_{\{|\xi|>\tau\}}
\]
and
\[
q_\xi(t):=\inf\{\tau\in\reals\mid n_\xi(\tau)\le t\}.
\]
Note that $\tau\mapsto 1-n_\xi(\tau)$ is the cumulative distribution function of $|\xi|$ and that $q_\xi$ is an inverse of $n_\xi$. Both $n_\xi$ and $q_\xi$ are nonincreasing. Recall that a probability space is {\em resonant} if it is atomless or completely atomic with all atoms having equal measure. 

\begin{example}[Lorentz and Marcinkiewicz spaces]\label{ex:ml}
Let $\phi$ be a nonnegative concave increasing function on $[0,1]$ and assume that $\P$ contains only the {\em Marcinkiewicz norm}
\[
p(\xi) := \sup_{t\in(0,1]}\left\{\frac{1}{\phi(t)}\int_0^tq_\xi(s)ds\right\}.
\]
Assume that $(\Omega,\F,P)$ is resonant and that $\lim_{t\searrow 0}t/\phi(t)=0$. Then $\tilde\Y$ is the {\em Marcinkiewicz space} associated with $\phi$, $p$ satisfies 1-4 and 
\[
\Y =\{\xi\in L^1\mid \lim_{t\searrow 0} \frac{1}{\phi(t)}\int_0^t q_\xi(s)ds=0\}.
\]
The polar of $p$ can be expressed as
\[
p^\circ(\eta) = \int_0^1q_\eta(t)d\phi(t).
\]
Thus, the dual of $\Y$ coincides with the {\em Lorentz space}
\begin{align*}
\U=\{\eta\in L^1\mid \int_0^1q_\eta(t)d\phi(t)<\infty\}.
\end{align*}
\end{example}

\section{Raw cadlag processes}\label{sec:raw}

This section characterizes the topological dual of a Fr\'echet space of raw (not necessarily adapted) cadlag processes. This will provide the basis for the duality theory of adapted cadlag processes in the subsequent sections. We will assume from now on that $(\Omega,\F,P)$ is complete.

The Banach space of cadlag functions on $[0,T]$ equipped with the supremum norm will be denoted by $D$. We allow $T=+\infty$ in which case $[0,T]$ is understood as the one point compactification of the positive reals. The spaces of Borel measures and purely discontinuous Borel measures on $[0,T]$ will be denoted by $M$ and $\tilde M$, respectively. The dual of $D$ can be identified with $M\times\tilde M$ through the bilinear form
\[
\langle y,(u,\tilde u)\rangle := \int ydu + \int y_-d\tilde u
\]
and the dual norm is given by 
\[
\sup_{y\in D}\{\int ydu + \int y_-d\tilde u\,|\,\|y\|\le 1\}=\|u\|+\|\tilde u\|,
\]
where $\|u\|$ denotes the total variation norm on $M$. This can be deduced from \cite[Theorem~1]{pes95} or seen as the deterministic special case of \cite[Theorem~VII.65]{dm82} combined with \cite[Remark~VII.4(a)]{dm82}.

We assume that $\Y$ and $\U$ are as in Section~\ref{sec:frechet} and define
\[
\Y(D):=\{y\in L^1(D)\mid \|y\|\in\Y\},
\]
where $L^1(D)$ is the space of cadlag processes $y$ with $E\|y\|<\infty$. Throughout, we identify processes that coincide almost surely everywhere on $[0,T]$. We equip $\Y(D)$ with the topology induced by the seminorms 
\[
y\mapsto p(\|y\|),\quad p\in\P.
\]

\begin{theorem}\label{thm:raw}
The space $\Y(D)$ is Fr\'echet and its dual can be identified with
\[
\U(M\times\tilde M) := \{(u,\tilde u)\in L^1(M\times\tilde M)\mid \|u\|+\|\tilde u\|\in\U \}
\]
through the bilinear form
\[
\langle y,(u,\tilde u)\rangle := E\left[\int ydu+\int y_-d\tilde u\right].
\]
Moreover, $L^\infty(D)$ is dense in $\Y(D)$, for every $y\in L^1(D)$ and $(u,\tilde u)\in L^1(M\times \tilde M)$,
\begin{equation}
E\left[\int ydu+\int y_-d\tilde u\right] \le p(\|y\|)p^\circ(\|u\|+\|\tilde u\|)\label{eq:cs}
\end{equation}
and
\begin{equation}\label{eq:pptv}
p^\circ(\|u\|+\|\tilde u\|) = \sup_{y\in L^\infty(D)}\left\{E\left[\int ydu +\int y_-d\tilde u \right] \midb p(\|y\|)\le 1\right\}.
\end{equation}
In particular, $(u,\tilde u)\mapsto p^\circ(\|u\|+\|\tilde u\|)$ is the polar of $y\mapsto p(\|y\|)$.
\end{theorem}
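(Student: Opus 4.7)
My approach mirrors the proof of Theorem~\ref{thm:fl}, lifting each step from random variables to cadlag random processes via the deterministic duality $D^{*}\cong M\times\tilde M$ recalled just above the statement. I will address the four assertions in the order (i)~Fr\'echet property and density of $L^\infty(D)$, (ii)~inequality~\eqref{eq:cs}, (iii)~polar formula~\eqref{eq:pptv}, (iv)~identification of the dual.

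For (i), property~2 applied pathwise to $\|y\|\in\Y$ shows that the topology of $\Y(D)$ dominates the $L^{1}(D)$-topology, so a Cauchy sequence $(y^{\nu})\subset\Y(D)$ is Cauchy in the Banach space $L^{1}(D)$ and converges there to a cadlag $y$. Lower semicontinuity of $p$ on $L^{1}$, combined with the $L^{1}$-convergence $\|y^{\mu}-y^{\nu}\|\to\|y-y^{\nu}\|$ as $\mu\to\infty$, yields $p(\|y-y^{\nu}\|)\le\liminf_{\mu}p(\|y^{\mu}-y^{\nu}\|)$, hence $y\in\Y(D)$ and $y^{\nu}\to y$ in $\Y(D)$. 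Countability of $\P$ yields metrizability, so $\Y(D)$ is Fr\'echet. For density, the truncation $y^{\nu}:=y\,\one_{\{\|y\|\le \nu\}}$ is cadlag, lies in $L^\infty(D)$, and satisfies $\|y-y^{\nu}\|=\|y\|\,\one_{\{\|y\|>\nu\}}$, so Remark~\ref{rem:trunc} applied to $\|y\|\in\Y$ gives $p(\|y-y^{\nu}\|)\to 0$.

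Claim (ii) is the composition of the pathwise bound $\int y\,du+\int y_{-}\,d\tilde u\le\|y\|(\|u\|+\|\tilde u\|)$ (a pointwise consequence of the deterministic dual norm formula) with the bilinear inequality of Theorem~\ref{thm:fl} applied to the nonnegative random variables $\|y\|$ and $\|u\|+\|\tilde u\|$. The $\ge$-direction of \eqref{eq:pptv} is then immediate. The reverse direction is the main technical step: fix $\epsilon>0$ and, using property~3 and the definition of $p^\circ$, pick $\xi\in L^\infty_{+}$ with $p(\xi)\le 1$ and $E[\xi(\|u\|+\|\tilde u\|)]\ge p^\circ(\|u\|+\|\tilde u\|)-\epsilon$. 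The deterministic duality furnishes, for every $\omega$,
\[
\|u(\omega)\|+\|\tilde u(\omega)\|=\sup_{z\in D,\,\|z\|_{\infty}\le 1}\left[\int z\,du(\omega)+\int z_{-}\,d\tilde u(\omega)\right].
\]
The main obstacle is to select, measurably in $\omega$, a near-maximizer $z(\omega)\in D$; once available, the process $y_{t}(\omega):=\xi(\omega)z_{t}(\omega)$ lies in $L^\infty(D)$ with $\|y\|\le\xi$ and yields $E[\int y\,du+\int y_{-}\,d\tilde u]\ge E[\xi(\|u\|+\|\tilde u\|)]-\epsilon$, which gives the $\le$-direction. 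The selection can be implemented either via a Kuratowski--Ryll-Nardzewski argument applied to the closed-valued multifunction $\omega\mapsto\{z:\|z\|_{\infty}\le 1,\ \int z\,du(\omega)+\int z_{-}\,d\tilde u(\omega)\ge\|u(\omega)\|+\|\tilde u(\omega)\|-\delta\}$, or by exhibiting a countable family of simple cadlag functions with rational breakpoints and values rich enough to approximate the supremum pathwise.

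For (iv), \eqref{eq:cs} continuously embeds $\U(M\times\tilde M)$ into $\Y(D)^{*}$. For surjectivity, any $l\in\Y(D)^{*}$ satisfies $|l(y)|\le Cp(\|y\|)$ for some $p\in\P$, $C$, and every $y\in\Y(D)$, and by property~2 its restriction to $L^\infty(D)$ is continuous in the sup-sup norm. The classical representation of the raw dual -- obtained by lifting $D^{*}\cong M\times\tilde M$ to the random setting as in \cite{pes95} or \cite[Theorem~VII.65]{dm82} -- then produces a jointly measurable pair $(u,\tilde u)$ of random Borel measures with $l(y)=E[\int y\,du+\int y_{-}\,d\tilde u]$ for every $y\in L^\infty(D)$. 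Plugging this into \eqref{eq:pptv} yields $p^\circ(\|u\|+\|\tilde u\|)\le C$, so $(u,\tilde u)\in\U(M\times\tilde M)$, and the density of $L^\infty(D)$ from (i) extends the equality to all of $\Y(D)$.
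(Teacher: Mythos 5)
Your steps (i)--(iii) follow essentially the paper's own route: completeness via the $L^1(D)$-limit plus lower semicontinuity (property 1), density by truncation and Remark~\ref{rem:trunc}, \eqref{eq:cs} from the pathwise bound plus Theorem~\ref{thm:fl}, and the hard direction of \eqref{eq:pptv} by a pathwise measurable selection of a near-maximizing $z(\omega)\in D$ multiplied by a scalar $\xi\in L^\infty_+$ with $p(\xi)\le 1$ --- this is exactly the paper's argument (the paper carries out the selection via the Skorokhod Borel structure and \cite[Theorem III.18]{cv77}, so your KRN/countable-family sketch is acceptable, provided you work with a Polish topology on $D$ since the sup-norm makes $D$ nonseparable). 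One small point in (i) left implicit: your Cauchy argument shows $\|y\|\in\tilde\Y$, whereas membership in $\Y(D)$ requires $\|y\|\in\Y$; this follows in one line because $p(\|y^\nu\|-\|y\|)\le p(\|y^\nu-y\|)\to 0$ and $\Y$ is closed in $\tilde\Y$.

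The genuine gap is in step (iv). You justify the representation of $l$ on $L^\infty(D)$ by a pair $(u,\tilde u)\in L^1(M\times\tilde M)$ solely by noting that property 2 makes $l$ continuous for the sup-sup norm on $L^\infty(D)$. That is not sufficient: the norm dual of $L^\infty(D)$ contains functionals with no such representation (already for constant-in-time processes, the norm dual of $L^\infty$ is $ba$, not $L^1$). The representation of \cite[Theorem~VII.65]{dm82} requires the order-continuity condition (5.1) of \cite[Section~VII.5]{dm82}, namely that $l(y^\nu)\to 0$ for uniformly bounded $(y^\nu)\subset L^\infty(D)$ with $\|y^\nu\|\downto 0$ almost surely; this is exactly where property 4 of the seminorms enters (via $|l(y)|\le Cp(\|y\|)$ together with monotonicity 3, or equivalently via the Mackey-continuity argument of Lemma~\ref{lem:mackey}), and it is the step the paper explicitly checks before invoking Dellacherie--Meyer. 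Since your functional does satisfy $|l(y)|\le Cp(\|y\|)$, the gap is fixable by adding this verification, but as written the appeal to the ``classical representation of the raw dual'' rests on a hypothesis (mere norm continuity) under which the conclusion is false; the remainder of (iv), i.e.\ using \eqref{eq:pptv} to get $p^\circ(\|u\|+\|\tilde u\|)\le C$ and extending by density, is then identical to the paper.
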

\begin{proof}
We start by showing that $\tilde\Y(D):=\{y\in L^1(D)\mid \|y\|\in\tilde\Y\}$ is complete under the topology induced by the seminorms $y\mapsto p(\|y\|)$. If $(y^\nu)$ is a Cauchy sequence in $\tilde\Y(D)$, it is, by Property 2, Cauchy also in $L^1(D)$ which is complete (see e.g.\ \cite[Theorem~VI.22]{dm82}), so $(y^\nu)$ $L^1(D)$-converges to an $y\in L^1(D)$. Being Cauchy in $\tilde\Y(D)$ means that for every $\epsilon>0$ and $p\in\P$, there is an $N$ such that
\[
p(\|y^\nu-y^\mu\|)\le\epsilon\quad\forall\nu,\mu\ge N.
\]
By the triangle inequality and property 3 of $p$,
\[
p(\|y^\nu-y\|-\|y-y^\mu\|)\le\epsilon\quad\forall\nu,\mu\ge N.
\]
Letting $\mu\to\infty$ and using property 1 now gives
\[
p(\|y^\nu-y\|)\le\epsilon\quad\forall\nu\ge N.
\]
Since $p\in\P$ and $\epsilon>0$ were arbitrary, we thus have $y\in\tilde\Y(D)$ and that $(y^\nu)$ converges in $\tilde\Y(D)$ to $y$. Thus $\tilde\Y(D)$ is complete. It is clear that $\Y(D)$ contains the closure of $L^\infty(D)$. On the other hand, given $y\in\Y(D)$, its pointwise projection $y^\nu$ to the interval $[-\nu,\nu]$ belongs to $L^\infty(D)$ and, by Remark~\ref{rem:trunc}, converges to $y$. Thus $\Y(D)$ is a closed subspace of a Fr\'echet space and thus, Fr\'echet as well.

We have
\[
\left[\int ydu+\int y_-d\tilde u\right]\le \|y\|(\|u\|+\|\tilde u\|)
\]
almost surely, so \eqref{eq:cs} follows from Theorem~\ref{thm:fl}. Every element of $\U(M\times\tilde M)$ thus defines a continuous linear functional on $\Y(D)$. Conversely, a continuous linear functional $J$ on $\Y(D)$ satisfies property (5.1) in \cite[Section~VII.5]{dm82} so, as in the proof of \cite[Theorem~VII.65]{dm82}, there exists $(u,\tilde u)\in L^1(M\times\tilde M)$ such that
\[
J(y) = E\left[\int ydu+\int y_-d\tilde u\right]
\]
on $L^\infty(D)$. Given $\delta\in(0,1)$, a measurable selection argument gives the existence of a $y\in L^1(D)$ such that
\[
\|y\|\le 1\quad\text{and}\quad \int ydu +\int y_-d\tilde u\ge\delta(\|u\|+\|\tilde u\|)
\]
almost surely\footnote{Indeed, $(D,\B(D))=(S,\B(S))$, where $S$ is the space of cadlag functions equipped with the Skorokhod topology. The set 
\[
G:=\{(y,\omega)\in S\times\Omega \mid \|y\|\le 1, \int ydu(\omega) +\int y_-d\tilde u(\omega)\ge\delta(\|u(\omega)\|+\|\tilde u(\omega)\|\}
\]
is $\B(S)\otimes\F$-measurable (see the proof of \cite[Lemma~3]{pp18}) and each $\omega$-section of $G$ is nonempty. Thus \cite[Theorem III.18]{cv77} gives the existence of a measurable selection.}.
Thus, for any $p\in\P$ and $\xi\in L^\infty_+$ such that $p(\xi)\le 1$, 
\begin{align*}
  E[\xi(\|u\|+\|\tilde u\|)] &\le E[\int (\xi y)du +\int (\xi y_-)d\tilde u]/\delta\\
&\le \sup_{y\in L^\infty(D)}\left\{E\left[\int ydu +\int y_-d\tilde u \right] \midb p(\|y\|)\le 1\right\}/\delta.
\end{align*}
The definition of $p^\circ$ now gives $p^\circ(\|u\|+\|\tilde u\|)\le p_{\Y(D)}^\circ(J)/\delta$. Since $\delta\in(0,1)$ was arbitrary, we see that the left hand side of \eqref{eq:pptv} is less than the right side. The reverse follows from \eqref{eq:cs}.
\end{proof}

\section{Adapted cadlag processes}\label{sec:acp}

This section starts with some useful observations concerning optional and predictable projections. We then give our first main result which gives a necessary and sufficient condition for the topological dual of a space of adapted cadlag processes to be representable by random measures.

Let $\FF$ be a filtration satisfying the usual conditions. The set of stopping times will be denoted by $\T$. A measurable process $y$ is said to be of {\em class $(D)$} if the set $\{y_\tau\mid \tau\in\T\}$ is uniformly integrable.  Given such a $y$, we will denote its {\em optional and predictable projections} by $\op y$ and $\pp y$, respectively. That is, $\op y$ is the unique optional process satisfying
\[
E[y_\tau\one_{\{\tau<\infty\}}\mid \F_\tau] = \op y_\tau\one_{\{\tau<\infty\}}\quad P\text{-a.s.}
\]
for every $\tau\in\T$ while $\pp y$ is the unique predictable process satisfying
\[
E\left[y_\tau\one_{\{\tau<\infty\}}\mid \F_{\tau-}\right]=\pp y_\tau\one_{\{\tau<\infty\}}\quad P\text{-a.s.}
\]
for every predictable time $\tau$. Here $\F_\tau:=\sigma(A \mid A\cap\{\tau\le t\}\in\F_t\ \forall\ t\}$ and $\F_{\tau-}:=\F_0\vee\sigma\{A\cap\{t<\tau\}\mid A\in\F_t,\, t\in\reals_+\}$. Throughout the paper, we identify processes that are equal almost surely everywhere, that is, $y^1=y^2$ if, almost surely, $y^1_t=y^2_t$ for all $t$.


By \cite[Remark~VI.50.(f)]{dm82}, the optional projection of a cadlag process of class $(D)$ is a cadlag process of class $(D)$ while the predictable projection of a caglad process of class $(D)$ is a caglad process of class $(D)$.

\begin{lemma}\label{lem:oppp}
For any cadlag process $y$ of class $(D)$, we have $(\op y)_-=\pp(y_-)$.
\end{lemma}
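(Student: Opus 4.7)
The plan is to show that both $(\op y)_-$ and $\pp(y_-)$ are predictable processes that agree at every predictable stopping time, and then invoke the predictable section theorem to conclude indistinguishability. Since $y$ is cadlag of class $(D)$, the cited \cite[Remark~VI.50(f)]{dm82} makes $\op y$ cadlag of class $(D)$, so its left-limit $(\op y)_-$ is caglad and adapted, hence predictable. On the other side, $y_-$ is caglad of class $(D)$, so $\pp(y_-)$ is a well-defined caglad predictable process. It therefore suffices to establish
\[
(\op y)_{\tau-} = \pp(y_-)_\tau \quad P\text{-a.s. on }\{\tau<\infty\}
\]
for every predictable stopping time $\tau$.

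Fix such a $\tau$ and choose an announcing sequence $(\tau_n)\subset\T$ with $\tau_n\upto\tau$ and $\tau_n<\tau$ on $\{\tau>0\}$. Because $y$ and $\op y$ are cadlag, $y_{\tau_n}\to y_{\tau-}$ and $(\op y)_{\tau_n}\to(\op y)_{\tau-}$ almost surely on $\{\tau<\infty\}$, and the class $(D)$ property upgrades both convergences to $L^1$. The defining property of the optional projection at each $\tau_n$ reads
\[
E[y_{\tau_n}\one_{\{\tau_n<\infty\}}\mid\F_{\tau_n}] = (\op y)_{\tau_n}\one_{\{\tau_n<\infty\}}.
\]
For any $A\in\bigcup_n\F_{\tau_n}$ there is $n_0$ with $A\in\F_{\tau_n}$ for all $n\ge n_0$, so testing the above equality against $\one_A$ and passing to the $L^1$-limit yields $E[y_{\tau-}\one_A]=E[(\op y)_{\tau-}\one_A]$. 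Since $\F_{\tau-}=\sigma(\bigcup_n\F_{\tau_n})$ and $(\op y)_{\tau-}$ is $\F_{\tau-}$-measurable as the value of a predictable process at a predictable time, a monotone class argument extends the identity to all $A\in\F_{\tau-}$. This gives $E[y_{\tau-}\mid\F_{\tau-}]=(\op y)_{\tau-}$ on $\{\tau<\infty\}$, which by the defining property of the predictable projection is exactly $\pp(y_-)_\tau$.

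The main technical point is the $L^1$-passage to the limit inside the conditional expectations: the class $(D)$ hypothesis is used precisely here, to convert almost sure left-continuity of $y$ and $\op y$ along $(\tau_n)$ into uniform integrability of $\{y_{\tau_n}\}$ and $\{(\op y)_{\tau_n}\}$. A secondary annoyance is the boundary $\tau=0$, where the strict announcing inequality $\tau_n<\tau$ is unavailable; there, however, $\F_{0-}=\F_0$ by the definition recalled in the excerpt, and the identity reduces to the convention for $y_{0-}$ at $t=0$. Once the predictable-time identity has been verified, the predictable section theorem delivers the desired indistinguishability $(\op y)_-=\pp(y_-)$.
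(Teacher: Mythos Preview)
Your proof is correct and follows essentially the same route as the paper's: reduce via the predictable section theorem to checking $(\op y)_{\tau-}=E[y_{\tau-}\mid\F_{\tau-}]$ at a predictable time $\tau$, announce $\tau$ by a sequence $(\tau_n)$, use class $(D)$ to pass the optional-projection identity at $\tau_n$ to the $L^1$-limit, and conclude via $\F_{\tau-}=\bigvee_n\F_{\tau_n}$. The only cosmetic differences are that the paper restricts to bounded predictable times (avoiding the $\{\tau<\infty\}$ bookkeeping) and encodes the test against $A\in\F_{\tau_n}$ by the restricted stopping time $\tau^\nu_j$ rather than by multiplying with $\one_A$.
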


\begin{proof}
Given bounded predictable time $\tau$, it is enough to verify, by the predictable section theorem \cite[Corollary~4.11]{hwy92}, that  $((\op y)_-)_\tau=\pp(y_-)_\tau$. By \cite[Theorem~4.16]{hwy92}, there is a sequence $(\tau^\nu)$ of stopping times with $\tau^\nu<\tau^{\nu+1}$ and $\tau^\nu\nearrow\tau$ almost surely. Let $A_\nu\in\F_{\tau^\nu}$, and $\tau^\nu_j:=\tau ^{\nu+j}$ on $A_\nu$ and $\tau^\nu_j:=+\infty$ otherwise. We have $A_\nu\in\F_{\tau^{\nu+j}}$ for each $j$ \cite[Theorem 3.4]{hwy92}, so $\tau^\nu_j$ are stopping times \cite[Theorem 3.9]{hwy92}. Since $y$ and $\op y$ are of class $(D)$,
\begin{align*}
E [(\op y)_{\tau-}\one_{A_\nu}]=\lim_j E [(\op y)_{\tau^\nu_{j}}\one_{A_{\nu}}]=\lim_j E [y_{\tau^\nu_{j}}\one_{A_\nu}] = E [y_{\tau-}\one_{A_\nu}].
\end{align*}
By \cite[Theorem 3.6]{hwy92}, $\F_{\tau-}=\bigvee_\nu \F_{\tau^\nu}$, which proves the claim, since $A\in\F_{\tau^\nu}$ was arbitrary.
\end{proof}

Given $(u,\tilde u)\in L^1(M\times\tilde M)$, there exist $u^o\in L^1(M)$ and $\tilde u^p\in L^1(\tilde M)$ such that for every bounded measurable process $y$,
\begin{align*}
E\int\op ydu &= E\int ydu^o,\\
E\int\pp ydu &= E\int ydu^p;
\end{align*}
see, e.g., \cite[Remark~VI.74(b)]{dm82}. The random measure $u^o$ is called the {\em optional projection} of $u$ while $\tilde u^p$ is called the {\em predictable projection} of $\tilde u$. One says that $u$ is {\em optional} if $u=u^o$ and that $\tilde u$ is {\em predictable} if $\tilde u=\tilde u^p$.

From now on, $\Y$ and $\U$ are as in Section~\ref{sec:frechet}. The optional projection is a linear mapping from $\Y(D)$ to the space of adapted cadlag processes of class $(D)$. We denote this linear mapping by $\pi$ and its kernel by 
\[
\ker\pi :=\{y\in\Y(D)\mid \op y=0\}.
\]
The space
\[
\hat\M:=\{(u,\tilde u)\in\U(M\times\tilde M)\mid u=u^o,\ \tilde u=\tilde u^p\}
\]
will play a central role in the remainder of this paper. Indeed, we will find it as the topological dual of spaces of adapted cadlag processes. 

\begin{lemma}\label{lem:ope}
For every $(u,\tilde u)\in\hat\M$ and $y\in \Y(D)$,
\begin{align*}
E\int y du  &= E\int \op y du,\\
E\int y_- d\tilde u &= E\int(\op y)_- d\tilde u.
\end{align*}
\end{lemma}
\begin{proof}
Let $u$ be nonnegative. By \cite[Theorem 5.16]{hwy92}, $E\int y du= E\int \op y du$ if $y$ is nonnegative. The random variables $\int y^+ du, \int \op y^+ du, \int y^- du$ and $\int \op y^- du$ are integrable almost surely, so
\[
E\int y du = E\int (y^+-y^-) du =  E\int \op y^+du - E\int \op y^- du = E\int \op ydu.
\]
For general $u$, the claim follows by taking differences. The second equality is proved similarly after noting that $(\op y)_-=\pp(y_-)$, by Lemma~\ref{lem:oppp}.
\end{proof}

The following characterizes $\hat\M$ in terms of the pairing of $\Y(D)$ and $\U(M\times\tilde M)$ obtained in Theorem~\ref{thm:raw} above. Given $u\in M$, we denote by $u^c$ and $u^d$ the continuous and purely discontinuous parts of $u$, respectively.


\begin{lemma}\label{lem:opm}
The space $\hat\M$ is the orthogonal complement of $\ker\pi$ and thus, weakly closed in $\U(M\times\tilde M)$.
\end{lemma}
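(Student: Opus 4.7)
The plan is to establish the two inclusions defining the annihilator relation $\hat\M=(\ker\pi\cap L^\infty(D))^\perp$; the weakly closed assertion then follows automatically, since the annihilator of any subset of the predual is $\sigma(\U(M\times\tilde M),\Y(D))$-closed by Theorem~\ref{thm:raw}. The inclusion $\hat\M\subseteq(\ker\pi\cap L^\infty(D))^\perp$ is a direct computation: for $(u,\tilde u)\in\hat\M$ and $y\in\ker\pi\cap L^\infty(D)$ one has $\op y=0$, and Lemma~\ref{lem:oppp} gives $\pp(y_-)=(\op y)_-=0$; the chains $E\int y\,du=E\int y\,du^o=E\int\op y\,du=0$ and $E\int y_-\,d\tilde u=E\int y_-\,d\tilde u^p=E\int\pp(y_-)\,d\tilde u=0$ then yield $\langle y,(u,\tilde u)\rangle=0$.

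For the reverse inclusion, the first step is to show that the optional projection maps $L^\infty(D)$ into itself. Given $z\in L^\infty(D)$ with $\|z\|\le C$ almost surely, for every stopping time $\tau$ we have $|\op z_\tau|=|E[z_\tau\mid\F_\tau]|\le C$ a.s., so the optional section theorem forces $\sup_t|\op z_t|\le C$ up to evanescence; combined with the cadlag-of-class-$(D)$ property of $\op z$ cited from \cite[Remark~VI.50(f)]{dm82}, this gives $\op z\in L^\infty(D)$. Consequently $z-\op z\in\ker\pi\cap L^\infty(D)$ for every $z\in L^\infty(D)$, and for $(u,\tilde u)$ in the annihilator the relation $\langle z-\op z,(u,\tilde u)\rangle=0$ rearranges, via Lemma~\ref{lem:oppp} and the defining identities of $u^o$ and $\tilde u^p$, to
\[
E\left[\int z\,d(u-u^o)+\int z_-\,d(\tilde u-\tilde u^p)\right]=0\quad\forall\, z\in L^\infty(D).
\]

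Since $u,u^o\in L^1(M)$ and $\tilde u,\tilde u^p\in L^1(\tilde M)$, the pair $(v,\tilde v):=(u-u^o,\tilde u-\tilde u^p)$ lies in $L^1(M\times\tilde M)$, so the polar formula \eqref{eq:pptv} of Theorem~\ref{thm:raw} applies: its right-hand side is zero for every $p\in\P$, whence $p^\circ(\|v\|+\|\tilde v\|)=0$. Property~2 of $p^\circ$ from Theorem~\ref{thm:fl} forces $\|v\|+\|\tilde v\|=0$ almost surely, i.e.\ $u=u^o$ and $\tilde u=\tilde u^p$. I expect the main obstacle to be the $L^\infty(D)$-invariance of $\op$: without it one could only produce test perturbations $z-\op z$ lying in $L^1(D)$, which need not sit in $L^\infty(D)$ and hence would not probe the annihilator; the optional section theorem is the natural tool to overcome this.
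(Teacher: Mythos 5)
Your first inclusion ($\hat\M\subseteq(\ker\pi\cap L^\infty(D))^\perp$) and your reduction of the converse to testing against the perturbations $z-\op z$, $z\in L^\infty(D)$ (including the section-theorem argument that $\op z\in L^\infty(D)$), are correct and parallel the paper's argument. The gap is in the final step: you assert that $(v,\tilde v):=(u-u^o,\tilde u-\tilde u^p)$ lies in $L^1(M\times\tilde M)$, i.e.\ that $\tilde u-\tilde u^p$ is purely discontinuous. This fails in general, because the ``predictable projection'' $\tilde u^p$ of a measure is its compensator, and the compensator of a purely discontinuous measure can be continuous: take $\tilde u=\delta_\tau$ with $\tau$ totally inaccessible. (The ``$\tilde u^p\in L^1(\tilde M)$'' in the paper's definition is a typo; the decomposition of $\tilde u^p$ into $(\tilde u^p)^c+(\tilde u^p)^d$ in the paper's own proof and in the adjoint formula of Theorem~\ref{thm:doob} shows the continuous part must be reckoned with.) Consequently \eqref{eq:pptv} cannot be applied to $(v,\tilde v)$: the formula is stated only for pairs whose second component is in $\tilde M$, and it is false otherwise. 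Indeed, for any nonzero continuous random measure $c$ the pair $(-c,c)$ satisfies $E[\int z\,d(-c)+\int z_-\,dc]=0$ for every $z\in L^\infty(D)$, since $\int z_-\,dc=\int z\,dc$ for cadlag $z$, yet its total variation is not zero. This degeneracy is exactly what your argument overlooks, and your conclusion $u=u^o$, $\tilde u=\tilde u^p$ does not follow as written.

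What the identity $E[\int z\,dv+\int z_-\,d\tilde v]=0$ for all $z\in L^\infty(D)$ actually gives, after moving the continuous part of $\tilde u^p$ into the first slot (legitimate by the same observation $\int z_-\,d(\tilde u^p)^c=\int z\,d(\tilde u^p)^c$), is
\[
E\left[\int z\,d\bigl(u-u^o-(\tilde u^p)^c\bigr)+\int z_-\,d\bigl(\tilde u-(\tilde u^p)^d\bigr)\right]=0\quad\forall z\in L^\infty(D),
\]
to which the uniqueness/polar argument of Theorem~\ref{thm:raw} does apply, yielding $u=u^o+(\tilde u^p)^c$ and $\tilde u=(\tilde u^p)^d$. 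To finish you still need an extra step that your proof omits: since $(\tilde u^p)^d$ is the purely discontinuous part of a predictable measure, it is itself predictable, so $\tilde u$ is predictable; hence $\tilde u^p=\tilde u$, $(\tilde u^p)^c=0$, and only then $u=u^o$. This is precisely how the paper's proof closes the argument.
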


\begin{proof}
By Lemma~\ref{lem:ope}, $\hat\M$ is contained in the orthogonal complement of $\ker\pi$. It thus suffices to show that $\hat\M$ contains the orthogonal complement of $\ker\pi\cap L^\infty(D)$. We have that $u\in\U(\hat\M)$ belongs to this complement if and only if
\begin{equation}\label{eq:vc}
E\left[\int (y-\op y)du+\int (y-\op y)_-d\tilde u\right] = 0\quad \forall y\in L^\infty(D).
\end{equation}
The equation can be written as
\begin{align*}
0&=E\left[\int (y-\op y)du+\int (y_--\pp (y_-))d\tilde u\right]\\
&=E\left[\int yd(u-u^o)+\int y_- d(\tilde u-\tilde u^p)\right]\\
&=E\left[\int yd(u-u^o-(\tilde u^p)^c)+\int y_- d(\tilde u-(\tilde u^p)^d)\right].
\end{align*}
Since $L^\infty(D)$ is dense in $\Y(D)$, the variational condition implies, by Theorem~\ref{thm:raw}, that $\tilde u-(\tilde u^p)^d=0$ and $u-u^o-(\tilde u^p)^c=0$. The first equation implies that $\tilde u$ is predictable and that $(\tilde u^p)^d=0$. The second equation then implies that $u$ is optional.
\end{proof}

The following is the first main result of this paper. It will yield, later on, characterizations of topological duals of various more specific spaces of adapted cadlag processes.

\begin{theorem}\label{thm:abs}
Let  $\D$ be a Fr\'echet space of adapted cadlag processes. The following are equivalent:
\begin{enumerate}
\item the optional projection is a continuous surjection from $\Y(D)$ to $\D$,
\item  $\op \Y(D)\subset \D$ and the dual of $\D$ can be identified with $\hat\M$ under the bilinear form
\[
\langle y,(u,\tilde u)\rangle =E\left[\int ydu +\int y_-d\tilde u\right].
\]
\end{enumerate}
In this case, the adjoint of the projection is the embedding of $\hat\M$ to $\U(M\times\tilde M)$, $\hat\M=(\ker\pi)^\perp$ and the topology of $\D$ is generated by the seminorms
\[
p_\D(y) := \inf_{z\in\Y(D)} \{p(\|z\|) \mid \op z = y\} \quad p\in \P
\]
whose polars are given by
\[
p_\D^\circ ((u,\tilde u))= p^\circ(\|u\|+\|\tilde u\|).
\]
\end{theorem}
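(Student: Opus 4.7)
The strategy is to apply the Closed Range Theorem in Fréchet spaces to the continuous linear map $\pi:\Y(D)\to\D$, identifying $\D^*$ via the quotient description $\D\cong\Y(D)/\ker\pi$ and then identifying $(\ker\pi)^\perp$ with $\hat\M$ using Lemma~\ref{lem:opm} and the calculus of optional/predictable projections.

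For the forward direction I would argue: if $\pi$ is surjective, the Open Mapping Theorem makes it a topological quotient map, so the topology on $\D$ is exactly the quotient topology, which is generated by the seminorms $p_\D$ displayed in the theorem, and $\D^*\cong(\ker\pi)^\perp\subseteq\U(M\times\tilde M)$ via Theorem~\ref{thm:raw}. Lemma~\ref{lem:opm} immediately gives $(\ker\pi)^\perp\subseteq\hat\M$, since $\ker\pi\cap L^\infty(D)\subseteq\ker\pi$. The substantive step is the reverse inclusion $\hat\M\subseteq(\ker\pi)^\perp$: given $z\in\ker\pi$ and $(u,\tilde u)\in\hat\M$, I would truncate pointwise to $z^\nu\in L^\infty(D)$, so $z^\nu\to z$ in $\Y(D)$ by Remark~\ref{rem:trunc} applied to $\|z\|$. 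Using that $u=u^o$, $\tilde u=\tilde u^p$, and Lemma~\ref{lem:oppp}, the pairing simplifies to
\[
\langle z^\nu,(u,\tilde u)\rangle = E\!\int \op z^\nu\,du+ E\!\int (\op z^\nu)_-\,d\tilde u.
\]
Since $\op z=0$, I get the key pointwise bound $|\op z^\nu_t|\le N^\nu_t:=E[\|z\|\one_{\{\|z\|>\nu\}}\mid\F_t]$, whose left limit is bounded analogously by $N^\nu_{t-}$. Applying the projection identities to the nonnegative constant-in-$t$ random variable $X^\nu:=\|z\|\one_{\{\|z\|>\nu\}}$ (using monotone convergence to pass beyond boundedness) reduces the bound to
\[
E\!\int N^\nu\,d|u|=E[X^\nu\|u\|],\qquad E\!\int N^\nu_-\,d|\tilde u|=E[X^\nu\|\tilde u\|].
\]
Because $\|z\|\in\Y$ and $\|u\|+\|\tilde u\|\in\U$, Theorem~\ref{thm:fl} makes $\|z\|(\|u\|+\|\tilde u\|)$ integrable, so dominated convergence sends both right-hand sides to $0$; passing to the limit yields $\langle z,(u,\tilde u)\rangle=0$, finishing this direction.

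For the converse, suppose $\D^*=\hat\M$ through the given bilinear form. For bounded $y$ the variational identity in the proof of Lemma~\ref{lem:opm} shows $(u,\tilde u)(\op y)=\langle y,(u,\tilde u)\rangle$; since $L^\infty(D)$ is dense in $\Y(D)$ (Theorem~\ref{thm:raw}), the adjoint $\pi^\ast:\hat\M\to\U(M\times\tilde M)$ is precisely the set-theoretic inclusion. Injectivity of $\pi^\ast$ combined with Hahn-Banach makes $\rge\pi$ dense in $\D$, while Lemma~\ref{lem:opm} asserts that $\rge\pi^\ast=\hat\M$ is weak*-closed; the Closed Range Theorem then upgrades this to $\rge\pi$ being closed, so $\pi$ is surjective. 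The auxiliary claims — that $\pi^\ast$ is the inclusion, $\hat\M=(\ker\pi)^\perp$, the quotient-seminorm description of $\D$'s topology, and the polar formula $p^\circ_\D((u,\tilde u))=p^\circ(\|u\|+\|\tilde u\|)$ — then follow from the standard identification of a quotient's dual with the annihilator of the kernel together with Theorem~\ref{thm:raw}.

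The main obstacle is the annihilation step in the forward direction: Lemma~\ref{lem:opm} only gives that $\hat\M$ kills $\ker\pi\cap L^\infty(D)$, and truncation does not preserve the kernel, while the natural compensator $\op z^\nu$ is only known to converge in $\D$, not in $\Y(D)$. The trick is to avoid that convergence entirely by exploiting that the defining optional/predictable identities for $u^o$ and $\tilde u^p$ require only nonnegativity, letting us route the whole estimate through the single integrable random variable $\|z\|(\|u\|+\|\tilde u\|)$.
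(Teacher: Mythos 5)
Your proposal is correct and structurally it is the paper's own argument: the adjoint is identified as the embedding via the variational identity behind Lemma~\ref{lem:opm} together with Lemma~\ref{lem:oppp}; surjectivity follows from dense range (injectivity of $\pi^*$ plus Hahn--Banach, i.e.\ the bipolar argument) and the closed range theorem combined with the weak* closedness of $\hat\M$ from Lemma~\ref{lem:opm}; and the converse goes through the open mapping/closed graph theorem, the isomorphism $\D\cong\Y(D)/\ker\pi$, and the identification of the quotient's dual with $(\ker\pi)^\perp$, which also yields the seminorms $p_\D$ and the polar formula via Theorem~\ref{thm:raw}. The one genuine divergence is the inclusion $\hat\M\subseteq(\ker\pi)^\perp$: the paper compresses this into a single sentence (``the density of $L^\infty(D)$ in $\Y(D)$ and the continuity of $\pi$ imply $(\ker\pi)^\perp=\hat\M$''), which is delicate because at that stage one does not yet know that $(u,\tilde u)$ acts continuously on $\D$ through the stated integral formula, so convergence $\op{z^\nu}\to 0$ in $\D$ alone does not finish the step. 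Your replacement argument is sound and fills this in: since $\op{z}=0$, linearity and monotonicity of the projection give $|\op{z^\nu}|\le N^\nu$ with $N^\nu_t=E[\|z\|\one_{\{\|z\|>\nu\}}\mid\F_t]$, the dual projection identities (using that the total variation of an optional, resp.\ predictable, measure is optional, resp.\ predictable) convert $E\int N^\nu d\|u\|$ and $E\int N^\nu_- d\|\tilde u\|$ into $E[X^\nu\|u\|]$ and $E[X^\nu\|\tilde u\|]$, and these vanish by dominated convergence because $\|z\|(\|u\|+\|\tilde u\|)\in L^1$ by the H\"older-type inequality of Theorem~\ref{thm:fl}; together with $z^\nu\to z$ in $\Y(D)$ (Remark~\ref{rem:trunc}) this gives $\langle z,(u,\tilde u)\rangle=0$ without ever using continuity or surjectivity of $\pi$. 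So what your route buys is an explicit, self-contained justification of the step the paper leaves implicit (and, as a by-product, of the fact that the pairing formula on $\D\times\hat\M$ extends from bounded processes), while the paper's version is shorter but leans on an unspelled-out limiting argument; everything else, including the quotient-seminorm and polar claims you defer to standard duality, matches the paper's explicit computation.
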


\begin{proof}
Assume 1. Continuity of $\pi$ implies that $\ker\pi$ is closed, so, by \cite[Theorem~12.14.9]{die76}, $\Y(D)/\ker\pi$ is a Fr\'echet space. Thus, by \cite[Theorem~11.2]{kn76} (a corollary of the closed graph theorem), $\D$ is isomorphic to the quotient space $\Y(D)/\ker\pi$. By \cite[Theorem 17.14(ii)]{kn76}, the dual of $\Y(D)/\ker\pi$ can be identified with the orthogonal complement $(\ker\pi)^\perp$ of $\ker\pi$ on the dual of $\Y(D)$ which, by Theorem~\ref{thm:raw}, is $\U(M\times\tilde M)$. By Lemma~\ref{lem:opm}, $(\ker\pi)^\perp=\hat\M$, so 2 holds.

Assume now 2. By the closed graph theorem, $\pi:\Y(D)\to \D$ is continuous if it has a closed graph. Given $u\in\U(\hat M)$ and $\bar u\in \hat\M$, Lemma~\ref{lem:ope} implies
\begin{align*}
(\gph\pi)^\perp &=\{(u,\bar u)\in\U(\hat M)\times\hat\M\mid \langle y,u\rangle+\langle \pi y, \bar u\rangle = 0\ \forall y\in\Y(D)\}\\
&=\{(u,\bar u)\in\U(\hat M)\times\hat\M\mid \langle y,u\rangle+\langle y, \bar u\rangle = 0\ \forall y\in\Y(D)\}\\
& = \{(u,\bar u)\in\U(\hat M)\times\hat\M\mid u=-\bar u\},
\end{align*}
so, since $\hat\M\subset \U(\hat M)$, the bipolar theorem and Lemma~\ref{lem:ope} give
\begin{align*}
\cl \gph \pi &= \{(y,\bar y) \in \Y(D)\times\D \mid \langle y,-u\rangle + \langle\bar y,u\rangle = 0 \ \forall u\in\hat\M \}\\
&= \{(y,\bar y) \in \Y(D)\times\D \mid \langle\op y,-u\rangle + \langle\bar y,u\rangle = 0 \ \forall u\in\hat\M \}\\
&=\{(y,\bar y )\in \Y(D)\times\D \mid \op y=\bar y\}\\
&=\gph \pi
\end{align*}
so the graph is closed and $\pi$ is continuous. The above also shows that the adjoint $\pi^*$ is indeed the embedding. Since $(\rge\pi)^\perp=\ker\pi^*=\{0\}$, the bipolar theorem gives $\cl\rge\pi=\D$, so it suffices to show that $\rge\pi$ is closed. By the closed range theorem \cite[Theorem 21.9]{kn76}, this is equivalent to $\rge\pi^*$ being closed in $\U(M\times\tilde M)$. Since $\pi^*$ is the embedding, its range is $\hat\M$ which is closed, by Lemma~\ref{lem:opm}.

The isomorphism of $\D$ and $\Y(D)/\ker\pi$ also implies that the topology of $\D$ is induced by the quotient space seminorms $p_\D$. Since the adjoint of the optional projection is the embedding of $\hat\M$, the polar of $p_\D$ can be expressed for every $(u,\tilde u)\in\hat\M$ as
\begin{align*}
p_\D^\circ((u,\tilde u)) &= \sup_{y\in\D}\{\langle y,(u,\tilde u)\rangle \,|\, \inf_{z\in\Y(D)}\{p(\|z\|)\,|\, \op z=y\}\le 1\}\\
&= \sup_{z\in\Y(D)}\{\langle\op z,(u,\tilde u)\rangle \,|\, p(\|z\|)\le 1\}\\
&= \sup_{z\in\Y(D)}\{\langle z,(u,\tilde u)\rangle \,|\, p(\|z\|)\le 1\}\\
&= p^\circ(\|u\|+\|\tilde u\|),
\end{align*}
where the last equality follows from Theorem~\ref{thm:raw}.
\end{proof}

\begin{remark}
By the bipolar theorem,
\[
p_\D(y) = \sup\{\langle y,(u,\tilde u)\rangle \,|\, p_\D^\circ((u,\tilde u))\le 1\}.
\]
Restricting the supremum to $\tilde u=0$ and $u$ that only has mass at $T$, gives $p_\D(y)\ge p(y_T)$. On the other hand, if $y$ is a martingale, it is the optional projection of $y_T\one$, so $p_\D(y)\le p(y_T)$. Thus, $p_\D(y)=p(y_T)$ if $y$ is a martingale. Moreover, by the closed range theorem, the set of martingales is a closed subspace of $\D$ and its dual can be identified with $\U$ via the bilinear form $\langle y,\eta \rangle =E[y_T\eta]$.
\end{remark}

Note also that $y\in\Y(D)$ does not imply $\op y\in\Y(D)$, in general. In other words, the optional projection need not be a projection in the sense of functional analysis. Indeed, if $y$ is a martingale, it is the optional projection of the constant process $\one y_T\in L^1(D)$ but it may happen that $\|y\|\notin L^1$. Similarly, $(u,\tilde u)\in\U(M\times\tilde M)$ does not imply $(u^o,\tilde u^p)\in\U(M\times\tilde M)$, in general. 

\begin{example}
Let $y\in L^1(D)$ be nonnegative such that $\op y \notin L^1(D)$. Let $\tau$ be a random time such that $E\op y_\tau =\infty$ and define $u=\delta_{\tau}$. We have $u \in L^\infty(M)$, but
\[
E\int ydu^o = E\int \op y du = E \op y_\tau = \infty,
\]
so $u^o\notin L^\infty(M)$. 
\end{example}

\section{Optional projection under Doob property}\label{sec:doob}

This section studies the case where the optional projection is a continuous linear mapping of the space $\Y(D)$ to itself. Without loss of generality, we assume that the collection $\P$ of seminorms forms a nondecreasing sequence. Continuity of the projection then means that for each $p\in\P$ there exists a $p'\in\P$ and a constant $q$ such that
\[
p(\|\op y\|)\le qp'(\|y\|)
\]
for all $y\in\Y(D)$. It turns out that this holds if and only if for each $p\in\P$ there exists a $p'\in\P$ and a constant $q$ such that
\[
p(\|m\|)\le qp'(m_T)
\]
for every martingale $m$. When this holds, we say that $\Y$ has the {\em Doob property}.

An Orlicz space has the Doob property if the conjugate of the defining Young function satisfies the $\Delta_2$-condition; see \cite[Section~VI.103]{dm82}. This generalizes the better known case of $\Y=L^p$ with $p>1$. More generally, we have the following characterization of Banach function spaces with Doob property; see \cite{kik99}.

Recall that a seminorm $p$ is {\em rearrangement invariant} if $p(\eta)=p(\eta')$ for all $\eta,\eta'\in L^0$ whose distributions coincide.

\begin{example}\label{ex:boyd}
Let $\Y$ be a rearrangement invariant Banach function space with the norm $p$ on an atomless probability space. The upper Boyd index is given by 
\[
\beta := \lim_{s\upto\infty}\frac{\log \|D_{1/s}\|_{\bar p}}{\log s},
\]
where $D:L^1([0,1])\to L^1([0,1])$ is the dilation operator defined by
\[
D_s q(t):=\begin{cases}
q(st)\quad&\text{if } st\le 1\\
0\quad&\text{if } st>1,
\end{cases}
\]
$\bar p$ is a sublinear symmetric function on $L^1([0,1])$ such that $p(\eta)=\bar p(q_{|\eta|})$ for all $\eta\in L^1$ and 
\[
\|D_{s}\|_{\bar p} := \sup_{q\in L^1([0,1])}\{ \bar p(D_{s}q) \mid \bar p(q)\le 1\} 
\]
is the operator norm of $D_{s}$.

There exists a constant $q$ such that
\[
p(\|m\|)\le q p(m_T)
\]
for every martingale $m$ if and only if $\beta<1$.

\end{example}

In this section, we define $\D$ as the optional processes in $\Y(D)$. Since convergence in $\Y(D)$ implies convergence almost surely everywhere, $\D$ is a closed subspace of $\Y(D)$. The first statement of Theorem~\ref{thm:raw} thus gives the following.

\begin{lemma}
The space $\D$ is Fr\'echet.
\end{lemma}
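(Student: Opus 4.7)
The plan is to invoke the classical fact that a closed linear subspace of a Fr\'echet space is itself Fr\'echet. Since Theorem~\ref{thm:raw} already establishes that $\Y(D)$ is Fr\'echet and $\D$ is defined as the subset of optional processes in $\Y(D)$, the task reduces entirely to showing that $\D$ is a closed linear subspace. Linearity is immediate from the linearity of the optional $\sigma$-algebra, so the only real content is closedness.

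To prove closedness, I would take a sequence $(y^\nu)\subset\D$ with $y^\nu\to y$ in $\Y(D)$ and transfer this convergence to a pointwise statement. By Property~2 of the seminorms in $\P$, one has $\tfrac{1}{c}E\|y^\nu-y\|\le p(\|y^\nu-y\|)$ for each $p\in\P$, so $E\|y^\nu-y\|\to 0$, i.e. $y^\nu\to y$ in $L^1(D)$. A standard subsequence/Borel--Cantelli argument then produces $(y^{\nu_k})$ along which $\sup_t|y^{\nu_k}_t-y_t|\to 0$ almost surely; in particular, off a $P$-null set $N$, we have $y^{\nu_k}_t(\omega)\to y_t(\omega)$ for every $t$.

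Since $\FF$ satisfies the usual conditions, $N\in\F_0$, so replacing each $y^{\nu_k}$ by $y^{\nu_k}\one_{N^c}$ preserves optionality. The optional $\sigma$-algebra is closed under everywhere-pointwise limits of processes, hence the limit is optional, and it equals $y$ almost surely everywhere. Under the convention of identifying processes equal a.s.\ everywhere, $y\in\D$, so $\D$ is closed in $\Y(D)$ and therefore Fr\'echet.

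No serious obstacle is anticipated: the only nontrivial point is the subsequence trick used to pass from seminorm convergence to almost sure pointwise convergence, which is needed because optionality is a pointwise property rather than a topological one. The completeness of the underlying probability space and the usual conditions on $\FF$ ensure that discarding a null set does not affect optionality.
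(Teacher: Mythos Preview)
Your proposal is correct and follows essentially the same approach as the paper: the paper simply remarks that ``convergence in $\Y(D)$ implies convergence almost surely everywhere, [so] $\D$ is a closed subspace of $\Y(D)$'' and then invokes Theorem~\ref{thm:raw}. Your argument is a careful unpacking of that sentence---using Property~2 to pass to $L^1(D)$, extracting an a.s.\ convergent subsequence, and handling the null set via the usual conditions---so the route is the same, just with the details spelled out.
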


When the optional projection $\pi$ is continuous on $\Y(D)$, it has an {\em adjoint} $\pi^*$ which is a continuous linear operator on the dual $\U(M\times\tilde M)$ of $\Y(D)$ defined by
\[
\langle \pi y, (u,\tilde u)\rangle =\langle y, \pi^*(u,\tilde u)\rangle \quad \forall y \in \Y(D),\ \forall (u,\tilde u)\in\U(M\times\tilde M). 
\]

\begin{theorem}\label{thm:doob}
The conditions
\begin{enumerate}
\item[(a)]
$\Y$ has the Doob property,
\item[(b)]
the optional projection is continuous on $\Y(D)$,
\end{enumerate}
are equivalent and imply that the adjoint of the optional projection is given by
\[
\pi^*(u,\tilde u) = (u^o+(\tilde u^p)^c,(\tilde u^p)^d).
\]
and that the dual of $\D$ can be identified with $\hat\M$ through the bilinear form
\[
\langle y,(u,\tilde u)\rangle = E\left[\int ydu+\int y_-d\tilde u\right].
\]
The topology of $\D$ is generated by the seminorms
\[
 p_\D(y):= \inf_{z\in\Y(D)} \{p(\|z\|) \mid \op z = y\} \quad p\in \P
\]
whose polars are given by
\[
 p_\D^\circ ((u,\tilde u))= p^\circ(\|u\|+\|\tilde u\|).
\]
\end{theorem}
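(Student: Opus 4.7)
My plan is to prove the equivalence of (a) and (b) first, then compute the adjoint $\pi^*$ directly, and finally invoke Theorem~\ref{thm:abs} for the duality and topology statements.

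For (b)$\Rightarrow$(a), I would start from a martingale $m$ with $m_T\in\Y$ and consider the constant cadlag process $y_t\equiv m_T$, which lies in $\Y(D)$ by property~2. The defining relation of the optional projection together with optional stopping gives $\op y_\tau = E[m_T\mid\F_\tau]=m_\tau$ for every stopping time $\tau$, so $\op y=m$. Assumed continuity then gives $p(\|m\|)=p(\|\op y\|)\le q\,p'(\|y\|)=q\,p'(m_T)$, i.e.\ the Doob property. For (a)$\Rightarrow$(b), fix $y\in\Y(D)$ and set $m_t:=E[\|y\|\mid\F_t]$, taken in its cadlag modification. Conditional Jensen gives $|\op y_t|\le m_t$ almost surely for each $t$; running this over a countable dense set of times and using right-continuity of both processes yields $\|\op y\|\le\|m\|$ almost surely. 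Properties~1 and 3 of $p$ combined with Doob then give $p(\|\op y\|)\le p(\|m\|)\le q\,p'(m_T)=q\,p'(\|y\|)$.

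For the adjoint, I would compute, for $y\in\Y(D)$ and $(u,\tilde u)\in\U(M\times\tilde M)$,
\[
\langle\pi y,(u,\tilde u)\rangle = E\left[\int\op y\,du+\int(\op y)_-\,d\tilde u\right].
\]
Lemma~\ref{lem:oppp} replaces $(\op y)_-$ by $\pp(y_-)$, and the defining relations of $u^o$ and $\tilde u^p$ convert this to $E[\int y\,du^o+\int y_-\,d\tilde u^p]$. Writing $\tilde u^p=(\tilde u^p)^c+(\tilde u^p)^d$ and observing that $y$ and $y_-$ differ only on a countable (random) set, hence agree $(\tilde u^p)^c$-almost everywhere, I obtain
\[
\langle\pi y,(u,\tilde u)\rangle = E\left[\int y\,d\bigl(u^o+(\tilde u^p)^c\bigr)+\int y_-\,d(\tilde u^p)^d\right] = \langle y,(u^o+(\tilde u^p)^c,(\tilde u^p)^d)\rangle,
\]
which identifies $\pi^*(u,\tilde u)=(u^o+(\tilde u^p)^c,(\tilde u^p)^d)$.

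Finally, since $\D$ is by definition the set of optional elements of $\Y(D)$, every $y\in\D$ satisfies $\op y=y$, so $\pi:\Y(D)\to\D$ is surjective. Continuity of $\pi$ from (b) together with this surjectivity places us in the setting of Theorem~\ref{thm:abs}, which yields the dual identification $\D^*\cong\hat\M$ under the stated bilinear form, the quotient-seminorm representation of the topology via $p_\D$, and the polar formula $p_\D^\circ((u,\tilde u))=p^\circ(\|u\|+\|\tilde u\|)$. The step I expect to be the main obstacle is the pathwise (as opposed to pointwise-in-$t$) dominance $\|\op y\|\le\|m\|$ in the implication (a)$\Rightarrow$(b), where cadlag regularity of both processes is needed to pass from inequalities at each deterministic $t$ to an almost sure inequality of suprema; the adjoint decomposition, by contrast, is routine once Lemma~\ref{lem:oppp} is in hand.
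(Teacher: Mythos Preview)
Your overall strategy matches the paper's: establish (a)$\Leftrightarrow$(b), compute the adjoint via Lemma~\ref{lem:oppp}, then invoke Theorem~\ref{thm:abs}. The implication (a)$\Rightarrow$(b) is handled slightly more cleanly in the paper by working at the level of processes rather than time-by-time: from $|y|\le\one\|y\|$ one gets $|\op y|\le\op|y|\le\op(\one\|y\|)=m$ up to indistinguishability, so the pathwise bound $\|\op y\|\le\|m\|$ is immediate and your anticipated ``main obstacle'' does not arise.

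There is, however, a genuine technical gap in your adjoint computation. You apply the defining identities $E\int\op y\,du=E\int y\,du^o$ and $E\int\pp(y_-)\,d\tilde u=E\int y_-\,d\tilde u^p$ for arbitrary $y\in\Y(D)$, but in the paper these identities are only asserted for \emph{bounded} measurable processes; for unbounded $y$ the individual integrals need not even be finite. More importantly, having obtained the formula you declare $\pi^*(u,\tilde u)=(u^o+(\tilde u^p)^c,(\tilde u^p)^d)$ without verifying that the right-hand side belongs to $\U(M\times\tilde M)$---and this is \emph{not} automatic (cf.\ the example immediately after Theorem~\ref{thm:abs}, where $u\in L^\infty(M)$ but $u^o\notin L^\infty(M)$). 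The paper closes this gap by first carrying out the calculation \eqref{e:adjoint} only for $y\in L^\infty(D)$, then using continuity of $\pi$ together with the polar formula \eqref{eq:pptv} to bound $(p')^\circ(\|u^o+(\tilde u^p)^c\|+\|(\tilde u^p)^d\|)\le q\,p^\circ(\|u\|+\|\tilde u\|)$, which places the candidate in $\U(M\times\tilde M)$; density of $L^\infty(D)$ then extends the identity to all of $\Y(D)$. Your argument becomes correct once you insert this step (or, alternatively, once you argue abstractly that continuity of $\pi$ forces $\pi^*$ to land in $\U(M\times\tilde M)$ and then identify it with your formula on the dense set $L^\infty(D)$).
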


\begin{proof}
As already noted, (b) implies (a). To prove the converse, let $y\in\Y(D)$ and $m=\op(\one\|y\|)$. Since $|y|\le\one\|y\|$, we have $|\op y|\le\op|y|\le m$, so $\|\op y\|\le\|m\|$, while (a) gives
\[
p(\|m\|)\le q p'(\|y\|).
\]
The monotonicity of $p$ now gives (b).

If $y\in L^\infty(D)$, Lemma~\ref{lem:oppp} gives for all $(u,\tilde u)\in\U(M\times\tilde M)$,
\begin{align}
\langle \op{y},(u,\tilde u)\rangle &= E\left[\int\op ydu+\int (\op{y})_-d\tilde u\right]\nonumber\\
&= E\left[\int\op ydu+\int \pp{(y_-)}d\tilde u\right]\nonumber\\
&= E\left[\int ydu^o+\int y_-d\tilde u^p\right]\nonumber\\
&= E\left[\int yd(u^o+(\tilde u^p)^c)+\int y_-d(\tilde u^p)^d\right].\label{e:adjoint}
\end{align}
Let $p\in\P$ be such that $p^\circ(\|u\|+\|\tilde u\|)<\infty$ . Under (b), \eqref{e:adjoint} and Theorem~\ref{thm:raw} give
\begin{align*}
E\left[\int yd(u^o+(\tilde u^p)^c)+\int y_-d(\tilde u^p)^d\right] &\le p(\|\op{y}\|)p^o(\|u\|+\|\tilde u\|)\\
&\le qp'(\|y\|)p^o(\|u\|+\|\tilde u\|).
\end{align*}
Taking the supremum over $\{y\in L^\infty(D)\,|\,p'(\|y\|)\le 1\}$, gives, by Theorem~\ref{thm:raw},
\[
(p')^\circ(\|u^o+(\tilde u^p)^c\|+\|(\tilde u^p)^d\|)\le qp^\circ(\|u\|+\|\tilde u\|).
\]
Thus $\|u^o+(\tilde u^p)^c\|+\|(\tilde u^p)^d\|\in\U$, so $(u^o+(\tilde u^p)^c,\tilde u^p)\in\U(M\times\tilde M)$. Thus the density of $L^\infty(D)$ in $\Y(D)$ implies that \eqref{e:adjoint} extends to all of $\Y(D)$, so the adjoint is given by
\[
\pi^*(u,\tilde u) = (u^o+(\tilde u^p)^c,(\tilde u^p)^d).
\]
Clearly, $\pi$ is a surjection to $\D$, so, by Theorem~\ref{thm:abs}, the dual of $\D$ can be identified with $\hat \M$.
\end{proof}

The characterization of the dual of $\D$ in Theorem~\ref{thm:abs} generalizes \cite[Theorem~VII.65]{dm82} and \cite[Theorem~3.1]{ara14} that dealt with $L^p$ and Morse hearts of Orlicz spaces, respectively. Indeed, \cite[pages~166--169]{dm82} establish the Doob inequality when $\Y$ is the Orlicz space associated with a Young function whose conjugate has the $\Delta_2$-property. In that case, we may apply Theorem~\ref{thm:abs} in the setting of Example~\ref{ex:orl}. Example~\ref{ex:boyd} extends this to symmetric Banach function spaces.






\section{Optional projection under Choquet property}\label{sec:choquet}

When $\Y$ fails to have the Doob property, it may happen that $y^o\notin\Y(D)$ for an $y\in\Y(D)$. Nevertheless, if
\begin{equation}\label{eq:jen0}
p(E_\tau\xi)\le p(\xi)\quad \forall\ \xi\in\Y,\ \tau\in\T
\end{equation}
for all $p\in\P$, then
\begin{align}\label{eq:jen}
\sup_{\tau\in\T}p(\op y_\tau)\le p(\|y\|)\quad \forall p\in\P
\end{align}
for all $y\in\Y(D)$. This means that the optional projection of a $y\in\Y(D)$ belongs to the space $\tilde\D$ of optional cadlag processes for which the seminorms
\[
p_\T(y):=\sup_{\tau\in\T}p(y_\tau)
\]
are finite for all $p\in\P$. We will assume \eqref{eq:jen0} and equip $\tilde\D$ with the topology induced by the $p_\T$ and define $\D$ as the closure in $\tilde\D$ of the space $\D^\infty$ of bounded optional cadlag processes. It was shown in \cite[Remark~15]{pp20} that \eqref{eq:jen0} holds whenever the underlying probability space is resonant and $p$ is rearrangement invariant.


It was shown in \cite{bis78} and \cite[Section VI.1]{dm82} that when $\Y=L^1$, the space $\tilde\D$ is complete. The following extends this to general $\Y$.

\begin{lemma}\label{lem:dtilde}
The spaces $\tilde\D$ and $\D$ are Fr\'echet and the elements of $\D$ are of class $(D)$.
\end{lemma}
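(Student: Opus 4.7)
The strategy is first to establish that the ambient space $\tilde\D$ is Fr\'echet; the claim for $\D$ then follows since $\D$ is by construction a closed subspace. The class $(D)$ property will then be deduced by approximating each $y\in\D$ with bounded (and hence trivially class $(D)$) processes and transferring uniform integrability.

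For $\tilde\D$, the topology is induced by the countable family $\{p_\T \mid p\in\P\}$ and is thus metrizable; it is Hausdorff because if $p_\T(y)=0$ for every $p\in\P$, then property 2 gives $y_\tau=0$ a.s.\ for every $\tau\in\T$, whence $y=0$ by the optional section theorem. The core issue is completeness. Given a Cauchy sequence $(y^\nu)\subset\tilde\D$, property 2 yields
\[
\sup_{\tau\in\T} E|y^\nu_\tau-y^\mu_\tau|\le c\, p_\T(y^\nu-y^\mu),
\]
so $(y^\nu)$ is Cauchy in the classical class-$(D)$ norm. The $L^1$ case recalled from \cite{bis78} and \cite[Section~VI.1]{dm82} then produces a cadlag optional process $y$ of class $(D)$ with $\sup_\tau E|y^\nu_\tau - y_\tau|\to 0$. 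To lift this convergence to $\tilde\D$, fix $p\in\P$, $\mu$, and $\tau\in\T$; since $y^\nu_\tau\to y_\tau$ in $L^1$, property 1 gives
\[
p(y_\tau-y^\mu_\tau)\le\liminf_\nu p(y^\nu_\tau-y^\mu_\tau)\le \liminf_\nu p_\T(y^\nu-y^\mu),
\]
and taking the supremum over $\tau$ yields $p_\T(y-y^\mu)\le\liminf_\nu p_\T(y^\nu-y^\mu)$, which tends to $0$ by the Cauchy property. In particular $p_\T(y)<\infty$, so $y\in\tilde\D$. Hence $\tilde\D$ is Fr\'echet, and $\D$ inherits this as a closed subspace.

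For the class $(D)$ assertion, given $y\in\D$ choose $(y^\nu)\subset\D^\infty$ with $y^\nu\to y$ in $\tilde\D$; the display above applied to this sequence shows $\sup_\tau E|y^\nu_\tau-y_\tau|\to 0$. Since each $y^\nu$ is bounded by some constant $M_\nu$, for any measurable $A\subset\Omega$ and $\tau\in\T$,
\[
E[|y_\tau|\one_A]\le E|y_\tau-y^\nu_\tau|+M_\nu P(A).
\]
Given $\eps>0$, first choose $\nu$ so that the first term is below $\eps/2$ uniformly in $\tau$, then take $P(A)<\eps/(2M_\nu)$; this yields uniform absolute continuity of $\{y_\tau\mid\tau\in\T\}$, which together with the $L^1$-boundedness $\sup_\tau E|y_\tau|\le c\,p_\T(y)<\infty$ is uniform integrability.

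The main obstacle is the completeness step, where one must bootstrap the classical $L^1$-based completeness of class $(D)$ cadlag processes to the finer seminorms $p_\T$. Property 1 (lower semicontinuity of $p$ on $L^1$) is what makes this bootstrap work; without it, one could not rule out that the $L^1$-limit has infinite $p_\T$-distance from the approximating sequence.
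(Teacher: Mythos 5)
Your proof is correct and follows essentially the same route as the paper: reduce completeness of $\tilde\D$ to the classical $L^1$ (class $(D)$) completeness via property 2, lift the convergence to the seminorms $p_\T$ using the lower semicontinuity of $p$ on $L^1$ (property 1), conclude for $\D$ as a closed subspace, and obtain class $(D)$ by approximating with bounded processes. Your minor variations---applying lsc in $\nu$ for fixed $\tau,\mu$ instead of the paper's reverse-triangle trick with property 3, and phrasing the last step via uniform absolute continuity rather than Chebyshev---are only cosmetic.
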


\begin{proof}

We start by showing that $\tilde\D$ is complete. If $(y^\nu)$ is a Cauchy sequence in $\tilde\D$, it is, by Property 2, Cauchy also in $\D^1$ of optional cadlag processes equipped with the norm $\sup_{\tau\in\T}E|y_\tau|$. By \cite[Theorem~VI.22]{dm82}), $\D^1$ is complete, so $(y^\nu)$ $\D^1$-converges to an $y\in \D^1$. Being Cauchy in $\tilde\D$ means that for every $\epsilon>0$ and $p\in\P$, there is an $N$ such that
\[
p_\T(y^\nu-y^\mu)\le\epsilon\quad\forall\nu,\mu\ge N.
\]
By the triangle inequality and property 3 of $p$,
\[
p_\T(|y^\nu-y|-|y-y^\mu|)\le\epsilon\quad\forall\nu,\mu\ge N.
\]
Letting $\mu\to\infty$ and using property 1 (and the fact that pointwise supremum of lsc functions is lsc) now gives
\[
p_\T(y^\nu-y)\le\epsilon\quad\forall\nu\ge N.
\]
Since $p\in\P$ and $\epsilon>0$ were arbitrary, we thus have $y\in\tilde\D$ and that $(y^\nu)$ converges in $\tilde\D$ to $y$. Thus $\tilde\D$ is complete. Since $\D$ is a closed subspace of a Fr\'echet space, it is Fr\'echet as well.


Given $y\in \D$ and $\epsilon>0$, there exists $y^\epsilon \in\D^\infty$ such that $\sup_{\tau\in\T}E|y_\tau-y^\epsilon_\tau|<\epsilon/2$. By Chebyshev's inequality,
\begin{align*}
\sup_{\tau\in\T}E[|y_\tau|\one_{\{|y_\tau|\ge\nu\}}] &\le  \sup_{\tau\in\T}E|y_\tau-y^\epsilon_\tau|+ \sup_{\tau\in\T}E[|y^\epsilon_\tau|\one_{\{|y_\tau|\ge\nu\}}]\\
& \le \epsilon/2 + \|y^\epsilon\|_{L^\infty} \sup_{\tau\in\T} E|y_\tau|/\nu< \epsilon
\end{align*}
for $\nu$ large enough, which shows that $y$ is of class $(D)$.
\end{proof}

We say that $p$ has the {\em Choquet property} if $p^\circ$ is a {\em Choquet integral} in the sense that, for every  $\eta\in\U$,
\[
p^\circ(\eta) = \int_0^\infty p^\circ(\one_{\{|\eta|\ge s\}})ds.
\]
This is clearly satisfied if $\Y=L^1$ or $\Y=L^\infty$ (although the latter fails property~4 in Section~\ref{sec:frechet}). More generally, we have the following.

\begin{example}
In the setting of Example~\ref{ex:ml}, the Marcinkiewicz norm has the Choquet property and it satisfies \eqref{eq:jen0}. If a rearrangement invariant norm has the Choquet property, then it is equivalent to a Marcinkiewicz norm. 
\end{example}
\begin{proof}
Let $p$ be the Marcinkiewicz norm. Since $p^\circ(1_A)=\phi(P(A))$, we get
\begin{align*}
p^\circ(\eta)&=\int_0^1 q_{|\eta|}(t)d\phi(t)\\
 &=-\int_0^1 \phi(t)dq_{|\eta|}(t)\\
& = \int_0^\infty \phi(n_{|\eta|}(s))ds \\
& = \int_0^\infty \phi(P(\{|\eta|\ge s\}))ds \\
& = \int_0^\infty p^\circ(\one_{\{|\eta|\ge s\}})ds, 
\end{align*}
where the first equality follows from Example~\ref{ex:ml}, the second from integration by parts and the third from \cite[Theorem~VI.55]{dm82}. Thus $p^\circ$ has the Choquet property. Since the Marcinkiewicz norm is rearrangement invariant, it satisfies \eqref{eq:jen0} by \cite[Remark~15]{pp20}. 

Assume now that $p$ is rearrangement invariant with the Choquet property. By \cite[Proposition 2.4.2]{bs88}, $p^\circ$ is rearrangement invariant, so there exists $\tilde\phi$ such that $p^\circ(1_A)= \tilde\phi(P(A))$ for all $A\in\F$. Since $p$ has the Choquet property,
\begin{align*}
p^\circ(\eta)= \int_0^\infty p^\circ(\one_{\{|\eta|\ge s\}})ds = \int_0^\infty \tilde\phi(n_{|\eta|}(s))ds &=-\int_0^1 \tilde\phi(t)dq_{|\eta|}(t)= \int_0^1 q_{|\eta|}(t)d\tilde\phi(t),
\end{align*}
where the third equality follows from \cite[Theorem~VI.55]{dm82} and the last from integration by parts. By \cite[Theorem 4.7]{kps82}, $\tilde\phi$ is quasiconcave in the sense of \cite[Definition 1.1]{kps82}, so, by \cite[Corollary~1.1]{kps82}, there exists a concave $\phi$ and $C\in\reals_+$ such that 
\[
\frac{1}{C}\phi(t)\le\tilde\phi(t)\le C\phi(t).
\]
Thus $p^\circ$ is equivalent to the Lorentz norm $\int_0^1q_{|\eta|}(t)d\phi(t)$, and hence $p$ is equivalent to a Marcinkiewicz norm.
\end{proof}


The following is an extension of \cite{sch86}; see also \cite[Section~4.5]{fs16}.

\begin{lemma}\label{lem:ma}
A real-valued function $\rho$ on $\U$ with $\rho(1)=1$ is a Choquet integral on $\U$ if and only if it is monotone, 
\[
\rho(\eta\wedge\nu)\nearrow \rho(\eta)\quad\forall\eta\in\U_+,
\]
and comonotone additive in the sense that
\[
\rho(\eta+\eta')=\rho(\eta)+\rho(\eta')
\] 
whenever $\eta,\eta'\in\U$ satisfy $(\eta(\omega)-\eta(\omega'))(\eta'(\omega)-\eta'(\omega'))\ge 0$ for all $\omega,\omega'\in\Omega$.
\end{lemma}

\begin{proof}
The necessity is proved as in the proof of \cite[Theorem~4.88]{fs16} (their argument does not require $\U=L^\infty$). As to sufficiency, \cite[Theorem]{sch86} says that, $\rho$ is Choquet integral on $L^\infty$. By monotone convergence,
\[
\int_0^\infty \rho(\one_{\{\eta\wedge \nu\ge s\}})ds \nearrow \int_0^\infty \rho(\one_{\{\eta\ge s\}})ds
\]
while $\rho(\eta\wedge\nu)\nearrow \rho(\eta)$, by assumption.
\end{proof}

\begin{theorem}\label{thm:choquet}
Assume that each $p\in\P$  satisfies \eqref{eq:jen0} and has the Choquet property. Then the dual of $\D$ can be identified with $\hat\M$ under the bilinear form
\[
\langle y,(u,\tilde u)\rangle := E\left[\int ydu + \int y_-d\tilde u\right].
\]
The optional projection is a continuous surjection of $\Y(D)$ to $\D$, its adjoint is the embedding of $\hat\M$ to $\U(M\times\tilde M)$,  and the topology of $\D$ is generated by the seminorms
\[
 p_\D(y):= \inf_{z\in\Y(D)} \{p(\|z\|) \mid \op z = y\} \quad p\in \P
\]
whose polars are given by
\[
 p_\D^\circ ((u,\tilde u))= p^\circ(\|u\|+\|\tilde u\|).
\]
\end{theorem}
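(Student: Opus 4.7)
The plan is to verify the hypotheses of Theorem~\ref{thm:abs} for the space $\D$ defined in this section, namely (a) continuity of $\pi:=\op$ as a map $\Y(D)\to\D$, and (b) identification of the topological dual $\D^*$ with $\hat\M$ under the bilinear form. Once (a) and (b) are in place, Theorem~\ref{thm:abs} immediately supplies the surjectivity of $\pi$, the description of $\pi^*$ as the embedding of $\hat\M$ into $\U(M\times\tilde M)$, and the formulas for $p_\D$ and $p_\D^\circ$.

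For (a), the pointwise inequalities $|\op y_\tau|\le \op|y|_\tau\le E_\tau\|y\|$ together with \eqref{eq:jen0} yield $p(\op y_\tau)\le p(\|y\|)$, hence $p_\T(\op y)\le p(\|y\|)$, so $\pi$ is continuous into $\tilde\D$. Since bounded raw cadlag processes project into $\D^\infty$, the density of $L^\infty(D)$ in $\Y(D)$ from Theorem~\ref{thm:raw} and the closedness of $\D$ in $\tilde\D$ force $\pi(\Y(D))\subseteq\D$.

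For (b), the main ingredient is the bound
\[
\left|E\left[\int y\,du+\int y_-\,d\tilde u\right]\right|\le 2\,p_\T(y)\,p^\circ(\|u\|+\|\tilde u\|),\qquad y\in\D,\ (u,\tilde u)\in\hat\M.
\]
Splitting $y$, $u$, $\tilde u$ into positive and negative parts reduces to the positive case. Writing $A_t:=u([0,t])$, the change of variables $\int y\,du=\int_0^{\|u\|}y_{A^{-1}(s)}\,ds$ with the stopping time $A^{-1}(s):=\inf\{t:A_t>s\}$, combined with Fubini and the $\Y$--$\U$ duality, gives
\[
E\int y\,du=\int_0^\infty E[y_{A^{-1}(s)}\one_{\{s<\|u\|\}}]\,ds\le p_\T(y)\int_0^\infty p^\circ(\one_{\{s<\|u\|\}})\,ds,
\]
and the Choquet hypothesis collapses the right-hand side into $p_\T(y)\,p^\circ(\|u\|)$. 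The $\tilde u$-term is treated identically with $\tilde A_t:=\tilde u([0,t])$: the level-crossing $\tilde A^{-1}(s)$ is a predictable stopping time because $\tilde A$ is a predictable nondecreasing cadlag process, Lemma~\ref{lem:oppp} delivers $y_{\tilde A^{-1}(s)-}=E[y_{\tilde A^{-1}(s)}|\F_{\tilde A^{-1}(s)-}]$, and \eqref{eq:jen0} extends to conditioning on $\F_{\tau-}$ by martingale convergence along an announcing sequence together with property~1, yielding $p(y_{\tilde A^{-1}(s)-})\le p_\T(y)$. This establishes $\hat\M\subseteq\D^*$. Conversely, any $\ell\in\D^*$ induces $\ell\circ\pi\in\Y(D)^*$, which Theorem~\ref{thm:raw} represents by a unique $(u,\tilde u)\in\U(M\times\tilde M)$; applying $\ell\circ\pi$ to $y-\op y\in\ker\pi\cap L^\infty(D)$ for arbitrary $y\in L^\infty(D)$ reproduces the variational condition used in the proof of Lemma~\ref{lem:opm}, which forces $(u,\tilde u)\in\hat\M$. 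Agreement of $\ell$ with $y\mapsto E[\int y\,du+\int y_-\,d\tilde u]$ on the dense subspace $\D^\infty$ then extends by continuity to all of $\D$.

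The main obstacle is the continuity estimate in (b). Without a Doob-type inequality one cannot control $p(\sup_t|y_t|)$ by a multiple of $p_\T(y)$, so the na\"ive pathwise bound $\int y\,du\le\|u\|\sup_t|y_t|$ is useless. The Choquet hypothesis is precisely what allows one to replace $\sup_t|y_t|$ by the stopping-time values $y_{A^{-1}(s)}$ in a layer-cake decomposition and then recover $p^\circ(\|u\|)$ from $\int_0^\infty p^\circ(\one_{\{\|u\|\ge s\}})\,ds$.
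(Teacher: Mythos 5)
Your proof follows essentially the same architecture as the paper's: reduce everything to Theorem~\ref{thm:abs}, get continuity of $\pi$ into $\tilde\D$ from \eqref{eq:jen0}, prove $\hat\M\subseteq\D^*$ by a time-change/layer-cake bound whose $s$-integral the Choquet property collapses, and recover a representing pair for an arbitrary $\ell\in\D^*$ via Theorem~\ref{thm:raw} and Lemma~\ref{lem:opm}. Your converse direction is in fact a small simplification: by working with $\ell\circ\pi\in\Y(D)^*$ instead of Hahn--Banach extending $\ell|_{\D^\infty}$ (the paper's route), the representing pair lies in $\U(M\times\tilde M)$ from the outset and you can skip the paper's polar-seminorm estimate showing that the projected measures still have summable total variation in $\U$.

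There is, however, a genuine error in the key continuity estimate, at the predictable-time term. You claim that Lemma~\ref{lem:oppp} gives $y_{\tilde A^{-1}(s)-}=E[y_{\tilde A^{-1}(s)}\mid\F_{\tilde A^{-1}(s)-}]$ for $y\in\D$. That identity says precisely that $\pp y=y_-$, i.e.\ that $y$ is regular in the sense of Section~\ref{sec:regular}, which fails for general optional cadlag processes of class $(D)$: take $y=\one_{[t_0,\infty)}\xi$ with $\xi$ bounded, $\F_{t_0}$-measurable and $E[\xi\mid\F_{t_0-}]\ne 0$. Applied to an optional $y$, Lemma~\ref{lem:oppp} only yields the vacuous statement $y_-=\pp(y_-)$, since $y_-$ is already predictable. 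The inequality you actually need, $p(y_{\tilde\tau_s-})\le p_\T(y)$ for the predictable time $\tilde\tau_s$, is true but must be obtained differently: take an announcing sequence of stopping times $\tau^\nu\nearrow\tilde\tau_s$ with $\tau^\nu<\tilde\tau_s$; right-continuity with left limits gives $y_{\tau^\nu}\to y_{\tilde\tau_s-}$ almost surely, class $(D)$ (Lemma~\ref{lem:dtilde}) upgrades this to $L^1$-convergence, and property~1 (lower semicontinuity of $p$ on $L^1$) gives $p(y_{\tilde\tau_s-})\le\liminf_\nu p(y_{\tau^\nu})\le p_\T(y)$. This is exactly the paper's argument; you already have the ingredients (announcing sequence, property~1) but deploy them only to extend \eqref{eq:jen0} to $\F_{\tau-}$, which no longer helps once the false identity is removed. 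With this repair (and the minor observation that $\inf\{t\mid\tilde A_t>s\}$ need only be a predictable time for Lebesgue-a.e.\ $s$, which is all the Fubini step requires), your proof goes through.
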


\begin{proof}
By \eqref{eq:jen}, the optional projection is continuous from $\Y(D)$ to $\tilde\D$ with norm one. Since $L^\infty(D)$ is dense in $\Y(D)$, the continuity of the projection implies that its range is contained in $\D$. By Theorem~\ref{thm:abs}, it suffices to show that $\hat\M$ is the dual of $\D$.

Let $y\in\D$ and $(u,\tilde u)\in\hat\M$ and denote the corresponding total variations processes by $u^{TV}$ and $\tilde u^{TV}$. By \cite[Theorem~IV.50]{dm78}, $\tau_s=\inf\{t\,|\, u^{TV}_t\ge s\}$ is a stopping time, and, by \cite[A on page~xiii]{dm82}, $\tilde\tau_s=\inf\{t\,|\, \tilde u^{TV}_t\ge s\}$ is a predictable time. By \cite[Theorem~55]{dm82} and Fubini-Tonelli,
\begin{align*}
E[\int ydu+\int y_-d\tilde u] &\le E\int (|y|du^{TV}+|y_-|d\tilde u^{TV})\\
&=E\int_0^{\infty}(|y_{\tau_s}|\one_{\{\|u\|\ge s\}}+|y_{\tilde \tau_s-}|\one_{\{\|\tilde u\|\ge s\}})ds\\
&=\int_0^\infty(E\left[|y_{\tau_s}|\one_{\{\|u\|\ge s\}}+|y_{\tilde \tau_s-}|\one_{\{\|\tilde u\|\ge s\}})\right]ds\\
&\le \int_0^\infty[p(y_{\tau_s})p^\circ(\one_{\{\|u\|\ge s\}})+p(y_{\tilde\tau_s-})p^\circ(\one_{\{\|\tilde u\|\ge s\}})]ds.
\end{align*}
By \cite[Theorem~4.16]{hwy92}, there is a sequence $(\tau^\nu)$ of stopping times with $\tau^\nu<\tau^{\nu+1}$ and $\tau^\nu\nearrow\tilde\tau_s$ almost surely. Since $y$ is of class $(D)$ and $p$ is weakly lsc in $L^1$, we get $p(y_{\tilde \tau_s-})\le\liminf_\nu p(y_{\tau^\nu})\le\sup_{\tau\in\T}p(y_\tau)$. By Choquet property,
\begin{align*}
E[\int ydu+\int y_-d\tilde u] &\le\sup_{\tau\in\T}p(y_\tau)\int_0^\infty[p^\circ(\one_{\{\|u\|\ge s\}})+p^\circ(\one_{\{\|\tilde u\|\ge s\}})] ds\\
&=p_\T(y)[p^\circ(\|u\|)+p^\circ(\|\tilde u\|)]\\
&\le 2p_\T(y)p^\circ(\|u\|+\|\tilde u\|).
\end{align*}
Thus $(u,\tilde u)$ defines a continuous linear functional on $\D$. 


On the other hand, let $J$ be a continuous linear functional on $\D$. The continuity implies that $J$ is continuous on $\D^\infty\subseteq\D\cap\Y(D)$ also with respect to the relative topology of $\Y(D)$. By Hahn--Banach, $J$ extends to a continuous linear functional on all of $\Y(D)$. Theorem~\ref{thm:raw} then gives the existence of a $(w,\tilde w)\in\U(M\times\tilde M)$ such that
\[
J(y) = E\left[\int ydw+\int y_-d\tilde w\right]\quad\forall y\in\D^\infty.
\]
By the definitions of the projections,
\[
J(y) = E\left[\int ydw^o+\int y_-d\tilde w^p\right] = E\left[\int ydu+\int y_-d\tilde u\right]\quad\forall y\in\D^\infty,
\]
where $(u,\tilde u):=(w^o+(\tilde w^p)^c,(\tilde w^p)^d)\in L^1(M\times\tilde M)$ with $u$ optional and $\tilde u$ predictable. The continuity of $J$ on $\D$ means that there is a $p\in\P$ such that
\[
p_\T^\circ(J):=\sup_{y\in\D}\left\{J(y) \midb p_\T(y)\le 1\right\}<\infty.
\]
By \eqref{eq:pptv}, Lemma~\ref{lem:oppp} and \eqref{eq:jen}, 
\begin{align*}
p^\circ(\|u\|+\|\tilde u\|) &= \sup_{y\in L^\infty(D)}\left\{E\left[\int ydu +\int y_-d\tilde u \right] \midb p(\|y\|)\le 1\right\}\\
&=\sup_{y\in L^\infty(D)}\left\{E\left[\int\op ydu +\int\pp(y_-)d\tilde u \right] \midb p(\|y\|)\le 1\right\}\\
&\le\sup_{y\in L^\infty(D)}\left\{E\left[\int\op ydu +\int(\op y)_-d\tilde u \right] \midb p_\T(\op y)\le 1\right\}\\
&=\sup_{y\in L^\infty(D)}\left\{J(\op y) \midb p_\T(\op y)\le 1\right\}\le p_\T^\circ(J),
\end{align*}
where the last equality holds since $\op y\in\D^\infty$ for all $y\in L^\infty(D)$. Thus, $J$ is represented on $\D^\infty$ by a $(u,\tilde u)\in\hat\M$. By continuity, the representation is valid on all of $\D=\cl\D^\infty$.
\end{proof}

It is clear from the above proof that, instead of the Choquet property, it would suffice that $\int_0^\infty p^\circ(\one_{\{\eta\ge s\}})ds$ is finite whenever $\eta\in\dom p^\circ$.

When $\Y=L^1$, Theorem~\ref{thm:choquet} can be written as follows.

\begin{corollary}\label{cor:L1}
The space $\D^1$ of optional cadlag processes of class $(D)$ equipped with the norm
\[
\|y\|_{\D^1}:=\sup_{\tau\in\T}E|y_\tau|
\]
is Banach and its dual can be identified with $\hat \M^\infty$ through the bilinear form
\[
\langle y,(u,\tilde u)\rangle =E\left[\int ydu+\int y_-d\tilde u\right].
\]
The optional projection is a continuous surjection of $L^1(D)$ to $\D^1$ and its adjoint is the embedding of $\hat\M^\infty$ to $L^\infty(M\times \tilde M)$. The topology of $\D^1$ is generated by the seminorm
\[
 p_\D(y):= \inf_{z\in L^1(D)} \{E\|z\| \mid \op z = y\}
\]
whose polar is given by
\[
 p_\D^\circ ((u,\tilde u))= \esssup(\|u\|+\|\tilde u\|).
\]
\end{corollary}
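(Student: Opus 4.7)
The plan is to derive the corollary as the $\Y=L^1$ specialization of Theorem~\ref{thm:choquet}, so the main work is verifying the hypotheses and identifying the abstract space $\D$ of Section~\ref{sec:choquet} with the concrete $\D^1$. Taking $\P=\{\,\|\cdot\|_{L^1}\,\}$, properties 1--4 of Section~\ref{sec:frechet} are all immediate: the $L^1$ norm is continuous (hence lsc) on $L^1$, sandwiched as $\|\xi\|_{L^1}\le\|\xi\|_{L^1}\le\|\xi\|_{L^\infty}$, monotone in $|\xi|$, and order-continuous by dominated convergence. Consequently $\U=L^\infty$ and $p^\circ=\|\cdot\|_{L^\infty}$. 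Condition \eqref{eq:jen0} is the standard $L^1$-contraction $E|E_\tau\xi|\le E|\xi|$ for conditional expectations.

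Next I would verify the Choquet property for $\Y=L^1$. For $\eta\in L^\infty_+$, the indicator $\mathbf{1}_{\{\eta\ge s\}}$ has $\|\mathbf{1}_{\{\eta\ge s\}}\|_{L^\infty}$ equal to $1$ when $P(\eta\ge s)>0$ and $0$ otherwise, and $P(\eta\ge s)>0$ iff $s\le\esssup\eta$. Hence
\[
\int_0^\infty p^\circ(\mathbf{1}_{\{\eta\ge s\}})\,ds=\int_0^\infty \mathbf{1}_{\{s\le\esssup\eta\}}\,ds=\|\eta\|_{L^\infty}=p^\circ(\eta),
\]
so the Choquet property holds. (Alternatively, one can invoke Lemma~\ref{lem:ma}: the $L^\infty$ norm is monotone, comonotone additive and satisfies $\esssup(\eta\wedge\nu)\nearrow\esssup\eta$.)

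The remaining step is to identify $\D=\cl_{\tilde\D}\D^\infty$ of Section~\ref{sec:choquet} with $\D^1$. Note that with $\Y=L^1$ the seminorm $p_\T$ becomes exactly $\|y\|_{\D^1}=\sup_{\tau\in\T}E|y_\tau|$. Lemma~\ref{lem:dtilde} gives $\D\subseteq\D^1$. For the reverse inclusion, given $y\in\D^1$, the truncations $y^n:=(y\wedge n)\vee(-n)\in\D^\infty$ satisfy
\[
\|y-y^n\|_{\D^1}\le\sup_{\tau\in\T}E\bigl[|y_\tau|\mathbf{1}_{\{|y_\tau|>n\}}\bigr]\longrightarrow 0
\]
by uniform integrability of $\{y_\tau\mid\tau\in\T\}$, so $y\in\D$.

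With these ingredients, Theorem~\ref{thm:choquet} delivers the corollary directly: the dual of $\D^1$ is $\hat\M^\infty$, optional projection is a continuous surjection $L^1(D)\to\D^1$ with adjoint the embedding $\hat\M^\infty\hookrightarrow L^\infty(M\times\tilde M)$, and the formulas for $p_\D$ and $p_\D^\circ=\esssup(\|u\|+\|\tilde u\|)$ follow from the corresponding statements in Theorem~\ref{thm:choquet}. That $\D^1$ is Banach under $\|\cdot\|_{\D^1}=p_\T$ also follows: Lemma~\ref{lem:dtilde} shows completeness under $p_\T$, and the inequality $p_\T\le p_\D$ (from continuity of the projection to $\tilde\D$ with norm one) combined with the open mapping theorem yields equivalence of $p_\T$ with the quotient norm $p_\D$. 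The only non-bookkeeping step is the identification $\D=\D^1$; everything else is a translation of Theorem~\ref{thm:choquet} into the $L^1$ notation.
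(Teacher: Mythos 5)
Your proposal is correct and follows essentially the same route as the paper: specialize Theorem~\ref{thm:choquet} to $\Y=L^1$ (where the Choquet property and \eqref{eq:jen0} are immediate) and reduce everything to showing $\D^1$ equals the closure of $\D^\infty$ in $\tilde\D$, which you do by the same truncation-plus-uniform-integrability estimate $\sup_{\tau\in\T}E[|y_\tau-y^\nu_\tau|]\le\sup_{\tau\in\T}E[|y_\tau|\one_{\{|y_\tau|\ge\nu\}}]\to 0$ used in the paper. Your additional verifications (properties 1--4, the polar computation, the Jensen condition) are details the paper treats as known, not a different argument.
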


\begin{proof}
Since $\Y=L^1$ has the Choquet property, it suffices, by Theorem~\ref{thm:choquet}, to check that $\D^1$ is the closure of $\D^\infty$ in $\tilde\D^1$. Let $y\in\D^1$ and define $y^\nu\in\D^\infty$ as the pointwise projection of $y$ to the Euclidean unit ball of radius $\nu=1,2,\ldots$. By uniform integrability,
\[
\sup_{\tau\in\T} E|y_\tau-y^\nu_\tau|\le\sup_{\tau\in\T}E[|y_\tau|\one_{\{|y_\tau|\ge\nu\}}]\rightarrow 0,
\]
so $y\in\cl\D^\infty$.
\end{proof}

Corollary~\ref{cor:L1} complements \cite[Theorem~67]{dm82} which characterizes the dual of the Banach space of cadlag processes whose pathwise sup-norm is integrable. The larger space $\D^1$ in Corollary~\ref{cor:L1} was studied in \cite[Section~VI.1]{dm82}. The above characterization of its dual seems new. The surjectivity of the projection in Corollary~\ref{cor:L1} was stated in \cite[Theorem~4]{bis78} without a complete proof.



\section{Regular processes}\label{sec:regular}\label{sec:regular}

Following \cite{bis78} we say that an adapted cadlag process $y$ of class $(D)$ is {\em regular} if 
\[
\pp y=y_-.
\]
According to Bismut~\cite[Theorem~3]{bis78}, regular processes are the optional projections of elements of $L^1(C)$. This section gives an easy derivation of Bismut's result while allowing for more general $\Y$ in place of $L^1$. We assume that $\Y$ is as in Section~\ref{sec:frechet} and define
\begin{align*}
\Y(C) &:=\{y\in L^1(C)\mid \|y\|\in\Y\}.
\end{align*}

The following specializes Theorem~\ref{thm:raw} to continuous processes.

\begin{corollary}\label{cor:raw}
The space $\Y(C)$ is Fr\'echet and its dual can be identified with
\[
\U(M) := \{u\in L^1(M)\mid \|u\|\in\U \}
\]
through the bilinear form
\[
\langle y,u\rangle := E\int ydu.
\]
For every $y\in L^1(C)$ and $u\in L^1(M)$,
\begin{equation*}
E\int ydu\le p(\|y\|)p^\circ(\|u\|)
\end{equation*}
and
\begin{equation*}
p^\circ(\|u\|) = \sup_{y\in L^\infty(C)}\left\{E\int ydu \midb p(\|y\|)\le 1\right\}.
\end{equation*}
In particular, $u\mapsto p^\circ(\|u\|)$ is the polar of $y\mapsto p(\|y\|)$.
\end{corollary}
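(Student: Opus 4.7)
My plan is to derive Corollary~\ref{cor:raw} from Theorem~\ref{thm:raw} by exploiting that, for continuous $y$, one has $y_-=y$, so the two-measure pairing on $\Y(D)$ collapses into a single-measure pairing with $u+\tilde u$ when restricted to $\Y(C)$.

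First, I would show that $\Y(C)$ is a closed subspace of $\Y(D)$, and therefore Fr\'echet by Theorem~\ref{thm:raw}. If $y^\nu\to y$ in $\Y(D)$, Property 2 in Section~\ref{sec:frechet} forces $L^1$-convergence of $\|y^\nu-y\|$, so along a subsequence $\|y^\nu-y\|\to 0$ almost surely; on the corresponding $\omega$'s the paths $y^\nu(\omega)$ converge uniformly to $y(\omega)$, and uniform limits of continuous functions are continuous, so $y\in\Y(C)$. The pointwise bound $|\int y\,du|\le\|y\|\|u\|$ combined with Theorem~\ref{thm:fl} immediately gives $E\int y\,du\le p(\|y\|)p^\circ(\|u\|)$, which in turn shows that each $u\in\U(M)$ defines a continuous linear functional on $\Y(C)$.

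For the converse direction, let $J$ be a continuous linear functional on $\Y(C)$. Hahn-Banach extends $J$ to a continuous linear functional on $\Y(D)$, which by Theorem~\ref{thm:raw} is represented by some $(u',\tilde u')\in\U(M\times\tilde M)$. Since $y_-=y$ on $\Y(C)$,
\[
J(y)=E\left[\int y\,du'+\int y\,d\tilde u'\right]=E\int y\,d(u'+\tilde u'),
\]
and $u:=u'+\tilde u'$ satisfies $\|u\|\le\|u'\|+\|\tilde u'\|\in\U$, so $u\in\U(M)$. Uniqueness of $u$ reduces to the deterministic Riesz theorem: if $v\in L^1(M)$ satisfies $E\int y\,dv=0$ for all $y\in L^\infty(C)$, then testing with $y=\one_A z$ for $A\in\F$ and $z$ a fixed deterministic continuous function shows $\int z\,dv(\omega)=0$ almost surely, and taking $z$ through a countable dense subset of $C([0,T])$ yields $v=0$ almost surely.

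It remains to prove the polar formula. The inequality $E\int y\,du\le p(\|y\|)p^\circ(\|u\|)$ already yields the $\le$ direction of \eqref{eq:pptv}'s analogue. For the reverse, I would mimic the measurable selection argument from the footnote in the proof of Theorem~\ref{thm:raw}, but working in $C([0,T])$ with its uniform topology: for $\delta\in(0,1)$ one produces $y\in L^1(C)$ with $\|y\|\le 1$ and $\int y\,du\ge\delta\|u\|$ almost surely, using Riesz to witness non-emptiness of each $\omega$-section and a Castaing-type selection theorem for measurability. Then for any $\xi\in L^\infty_+$ with $p(\xi)\le 1$, the process $\xi y$ belongs to $L^\infty(C)$ with $p(\|\xi y\|)\le p(\xi)\le 1$, and
\[
E[\xi\|u\|]\le E\int(\xi y)\,du/\delta\le\sup_{z\in L^\infty(C)}\{E\textstyle\int z\,du\mid p(\|z\|)\le 1\}/\delta.
\]
Taking the supremum over $\xi$ and sending $\delta\uparrow 1$ yields the claim. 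The main technical obstacle is verifying joint Borel-measurability of the selection set in $C([0,T])\times\Omega$ under the uniform topology, but this is routine once one observes that $(y,\omega)\mapsto\int y\,du(\omega)$ is measurable because $u\in L^1(M)$ and integration against continuous functions is $\tau$-continuous in $y$ on bounded sets.
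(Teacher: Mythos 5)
Your proposal is correct and follows essentially the same route as the paper: $\Y(C)$ closed in $\Y(D)$, Hahn--Banach extension plus Theorem~\ref{thm:raw} with the collapse $y_-=y$ giving representation by $u'+\tilde u'\in\U(M)$, and the polar formula via the same measurable-selection argument as in Theorem~\ref{thm:raw} (now in the Polish space $C$). The extra details you supply (closedness of $\Y(C)$, uniqueness of the representing measure) are points the paper leaves implicit, and they check out.
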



\begin{proof}
$\Y(C)$ is a closed subspace of $\Y(D)$ and thus Fr\'echet. The elements of $\U(M)$ define continuous linear functionals on $\Y(C)$. On the other hand, by Hahn-Banach, a continuous linear functional $l$ on $\Y(C)$ extends to a continuous linear functional on $\Y(D)$, which, by Theorem~\ref{thm:raw}, has the expression
\[
l(y)=E[\int ydu+\int y_-d\tilde u]
\]
for some $(u,\tilde u)\in\U(M\times\tilde M)$. On $\Y(C)$, this can be written as 
\[
l(y)=E\int yd(u+\tilde u),
\]
where $u+\tilde u\in \U(M)$. The expression for the polar seminorm follows as in the proof of Theorem~\ref{thm:raw}.
\end{proof}

We will assume that one of the equivalent conditions in Theorem~\ref{thm:abs} is satisfied and denote
\[
\R := \{y\in\D\,|\,\pp y=y_-\}.
\]
We endow $\R$ with the relative topology it has as a subspace of $\D$. Let
\[
\M :=\{u\in\U(M)\mid u\text{ optional}\}.
\]
The following is proved like Lemma~\ref{lem:opm} except that instead of Theorem~\ref{thm:raw} one applies Corollary~\ref{cor:raw}.

\begin{lemma}\label{lem:opm2}
The space $\M$ is the orthogonal complement of $\ker\pi$ and thus, weakly closed in $\U(M)$.
\end{lemma}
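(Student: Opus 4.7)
The plan is to follow the proof of Lemma~\ref{lem:opm} verbatim, substituting Corollary~\ref{cor:raw} for Theorem~\ref{thm:raw}. The first step is to establish the variational characterization that $u \in \U(M)$ lies in $\M$ if and only if
\[
E\int y\,du \;=\; E\int \op y\,du \qquad \forall\, y \in L^\infty(C). \tag{$\star$}
\]
The direction $\M \Rightarrow (\star)$ is immediate from the defining property of $u^o$. For the converse, rewrite $(\star)$ as $E\int y\,d(u - u^o) = 0$ for every $y \in L^\infty(C)$. Both $u$ and $u^o$ lie in $L^1(M)$, and bounded continuous test processes of product form $y_t(\omega) = g(t)\mathbf{1}_A(\omega)$ (with $g \in C[0,T]$ and $A \in \F$), combined with a countable dense family of $g$ and the pathwise Riesz representation theorem, separate $L^1(M)$; this forces $u - u^o = 0$, so $u \in \M$. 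This separation step is the role played by Theorem~\ref{thm:raw} in Lemma~\ref{lem:opm}, now handled in the continuous setting by Corollary~\ref{cor:raw}.

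The second step reinterprets $(\star)$ as orthogonality. Subtracting, $(\star)$ reads $E\int (y - \op y)\,du = 0$ for all $y \in L^\infty(C)$, so $\M$ is the orthogonal complement, in $\U(M)$, of the linear subspace $\{y - \op y : y \in L^\infty(C)\} \subseteq L^\infty(D)$. Every $z \in \ker\pi \cap L^\infty(C)$ satisfies $z = z - \op z$ (since $\op z = 0$) and therefore belongs to this subspace; hence any $u$ verifying $(\star)$ annihilates $\ker\pi \cap L^\infty(C)$, giving $\M \subseteq (\ker\pi \cap L^\infty(C))^\perp$. For the reverse inclusion I would take $u \in (\ker\pi \cap L^\infty(C))^\perp \cap \U(M)$, set $\mu := u - u^o \in L^1(M)$, and note that $\mu^o = u^o - (u^o)^o = 0$ and that $\mu \perp \ker\pi \cap L^\infty(C)$ by direct computation. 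For $y \in L^\infty(C)$, the identity $\mu^o = 0$ yields $E\int y\,d\mu = E\int (y - \op y)\,d\mu$, and the right-hand side is then shown to vanish by approximating $y - \op y$ (which lies in $\ker\pi \cap L^\infty(D)$) by elements of $\ker\pi \cap L^\infty(C)$ in the $L^1$ sense against $\mu$; this, together with the separation step, forces $\mu = 0$, so $u \in \M$. Weak closedness of $\M$ is then automatic from being an orthogonal complement.

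The main obstacle is precisely this last approximation: in the cadlag version of Lemma~\ref{lem:opm} the equality $\{y - \op y : y \in L^\infty(D)\} = \ker\pi \cap L^\infty(D)$ is automatic because $\op y$ remains in $L^\infty(D)$, whereas in the continuous setting $\op y$ is only regular (its jumps occur at totally inaccessible times) and therefore generally leaves $L^\infty(C)$. Bridging this gap requires constructing continuous test processes with vanishing optional projection that approximate $y - \op y$ on the support of $u$; the regularity of $\op y$ and finiteness of the total variation of $u$ make a dominated-convergence argument available. Once this approximation is in place, the proof closes exactly as in Lemma~\ref{lem:opm}, and the representation of $\M$ as an orthogonal complement follows.
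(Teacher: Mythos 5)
Your first step (the variational characterization $u\in\M\iff E\int y\,du=E\int\op{y}\,du$ for all $y\in L^\infty(C)$, proved by separation via Corollary~\ref{cor:raw}) and the resulting inclusion $\M\subseteq(\ker\pi\cap L^\infty(C))^\perp$ are correct and follow the paper's template. You have also correctly located the point where the template breaks: in Lemma~\ref{lem:opm} the set $\{\op{y}-y\mid y\in L^\infty(D)\}$ \emph{equals} $\ker\pi\cap L^\infty(D)$, whereas $\{\op{y}-y\mid y\in L^\infty(C)\}$ strictly contains $\ker\pi\cap L^\infty(C)$ because $\op{y}$ generally leaves $C$. The variational characterization therefore only identifies $\M$ as the annihilator of the larger set, and the reverse inclusion $(\ker\pi\cap L^\infty(C))^\perp\subseteq\M$ --- which is the entire content of the lemma, since it is what makes $\M$ weakly closed in the duality with $\Y(C)$ --- is left unproved. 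Your proposed repair, approximating $y-\op{y}$ against $\mu=u-u^o$ by elements of $\ker\pi\cap L^\infty(C)$, is asserted rather than carried out, and I do not see how to carry it out along those lines: to write $y-\op{y}$ as a limit of continuous processes with vanishing optional projection one essentially needs continuous processes whose optional projection is (close to) $\op{y}$, which is Bismut's theorem --- the very result the paper derives \emph{from} this lemma via Theorem~\ref{thm:reg} --- so that route is circular; and naive time-mollification of $y-\op y$ does not commute with the optional projection, so it does not stay in $\ker\pi$. As written, the proposal proves one inclusion and a plausibility argument for the other.

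The missing inclusion does hold, but it needs an argument of a different character. One elementary route: let $u\in\U(M)$ annihilate $\ker\pi\cap L^\infty(C)$, fix $t$, and test against $z_s=\xi g(s)$ where $g$ is continuous with $g=0$ on $[t,T]$ and $\xi\in L^\infty$ satisfies $E[\xi\mid\F_t]=0$; then $\op{z}_s=E[\xi\mid\F_s]g(s)=0$, so $z\in\ker\pi\cap L^\infty(C)$ and $0=E[\xi\int g\,du]$. Since this holds for every such $\xi$, the random variable $\int g\,du$ is $\F_t$-measurable; letting $g\upto\one_{[0,t)}$ and using right-continuity of the filtration shows that $s\mapsto u([0,s])$ is adapted, hence optional, hence $u=u^o$. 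Some such argument (or another genuine substitute for the failed identity $\{\op{y}-y\mid y\in L^\infty(C)\}=\ker\pi\cap L^\infty(C)$) must be supplied; without it the proof is incomplete at its decisive step. Note that the paper's own one-line indication (``proved like Lemma~\ref{lem:opm}'') glosses over exactly the same obstruction, so identifying it is to your credit --- but identifying a gap is not the same as closing it.
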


Combining this with Theorem~\ref{thm:abs} and the Hahn--Banach theorem, gives the following.

\begin{theorem}\label{thm:reg}
Under the assumptions of Theorem~\ref{thm:abs}, $\R$ is Fr\'echet and its dual can be identified with $\M$ under the bilinear form
\[
\langle y,u\rangle := E\int ydu.
\]
The optional projection is a continuous surjection of $\Y(C)$ to $\R$, its adjoint is the embedding of $\M$ to $\U(M)$ and $(\ker\pi)^\perp=\M$. Moreover, the topology of $\R$ is generated by the seminorms
\[
p_\R(y) = \inf_{z\in\Y(C)}\{p(\|z\|)\,|\,\op z=y\}
\]
the polars of which are given by
\[
p_\R^\circ(u) = p^\circ(\|u\|).
\]
\end{theorem}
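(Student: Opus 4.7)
The plan is to follow the template of Theorem~\ref{thm:abs}, substituting $(\Y(C),\U(M),\M)$ for $(\Y(D),\U(M\times\tilde M),\hat\M)$ and transferring the already-established duality between $\D$ and $\hat\M$ down to the subspace $\R\subseteq\D$ via Hahn--Banach. First I will check that $\R$ is closed in $\D$, so that it is Fr\'echet. Given $y^\nu\to y$ in $\D$ with each $y^\nu\in\R$, pick any predictable time $\tau$ and any $A\in\F_{\tau-}$: the bounded measure $\one_A\delta_\tau$ lies in $\hat\M$ both as $(u,0)$ (optional, since $A\in\F_{\tau-}\subseteq\F_\tau$) and as $(0,\tilde u)$ (predictable), so continuity of the pairings with these two elements of $\hat\M$ yields $E[y^\nu_\tau\one_A]\to E[y_\tau\one_A]$ and $E[y^\nu_{\tau-}\one_A]\to E[y_{\tau-}\one_A]$. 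Regularity of the $y^\nu$ identifies the two sequences; passing to the limit and using the predictable section theorem gives $\pp y=y_-$, i.e., $y\in\R$.

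Next, the optional projection maps $\Y(C)$ continuously into $\R$. Continuity is automatic from the hypothesis of Theorem~\ref{thm:abs} and the continuous inclusion $\Y(C)\hookrightarrow\Y(D)$ afforded by Corollary~\ref{cor:raw}. For $y\in L^1(C)$ the identity $y=y_-$ together with Lemma~\ref{lem:oppp} gives $(\op y)_-=\pp(y_-)=\pp y$, while the tower property of conditional expectation yields $\pp(\op y)=\pp y$; hence $\pp(\op y)=(\op y)_-$ and $\op y\in\R$.

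The heart of the argument is the dual identification. In one direction, every $u\in\M$ yields $(u,0)\in\hat\M$, which by Theorem~\ref{thm:abs} defines a continuous linear functional on $\D$ whose restriction to $\R$ is $y\mapsto E\int y\,du$. In the other direction I invoke Hahn--Banach: a continuous linear functional $J$ on the closed subspace $\R$ of the Fr\'echet space $\D$ extends to all of $\D$ and is therefore represented by some $(u,\tilde u)\in\hat\M$. For $y\in\R$, $y_-=\pp y$ together with predictability of $\tilde u$ and the defining identity of the predictable projection of a random measure give
\[
E\int y_-\,d\tilde u=E\int\pp y\,d\tilde u=E\int y\,d\tilde u^p=E\int y\,d\tilde u,
\]
so $J(y)=E\int y\,dv$ with $v:=u+\tilde u$. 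This $v$ lies in $\M$: optionality is inherited from $u$ and from $\tilde u$ (predictable, hence optional), and $\|v\|\le\|u\|+\|\tilde u\|\in\U$ by the lattice property from Theorem~\ref{thm:fl}.

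Once the dual is pinned down, the rest is a direct replay of Theorem~\ref{thm:abs} in this smaller setting. For $y\in\Y(C)$ and $u\in\M$, $\langle\op y,u\rangle=E\int\op y\,du=E\int y\,du^o=E\int y\,du$, so the adjoint $\pi^*\colon\M\to\U(M)$ is the embedding; its range is closed by Lemma~\ref{lem:opm2} and its kernel trivial, so the closed range theorem forces $\pi(\Y(C))=\R$. The ensuing isomorphism $\R\cong\Y(C)/\ker\pi$ then delivers $(\ker\pi)^\perp=\M$, the quotient-seminorm description of $p_\R$, and the polar formula $p_\R^\circ(u)=p^\circ(\|u\|)$ by the same calculation as in Theorem~\ref{thm:abs}, now using Corollary~\ref{cor:raw} in place of Theorem~\ref{thm:raw}. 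The main obstacle I anticipate is precisely the closedness of $\R$, where the availability of Dirac measures in $\hat\M$ and the predictable section theorem are essential.
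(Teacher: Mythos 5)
Your proposal is correct and follows essentially the same route as the paper: Hahn--Banach extension of functionals on $\R$ to $\D$ with the $(u,\tilde u)\mapsto u+\tilde u$ collapse via regularity and predictability, Lemma~\ref{lem:opm2} plus the closed range theorem for surjectivity of the projection from $\Y(C)$, and the quotient/closed graph argument for the seminorms and polars. Your sequential closedness argument for $\R$ using the measures $\one_A\delta_\tau$ is only a minor variant of the paper's identification of $\R$ as the orthogonal complement of $\{(u,\tilde u)\in\hat\M \mid u+\tilde u=0\}$; both rest on Dirac measures at predictable times and the predictable section theorem.
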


\begin{proof}
We start by showing that $\R$ is the orthogonal complement (with respect to the pairing of $\D$ and $\hat\M$) of the linear space
\[
\L = \{(u,\tilde u)\in\hat\M\,|\, u+\tilde u=0\}.
\]
If $y\in\R$ and $(u,\tilde u)\in\hat\M$, we have
\[
E\left[\int ydu+\int y_-d\tilde u\right] =E\left[\int ydu+\int \pp y d\tilde u\right] = E\int yd(u+\tilde u),
\]
so $\R\subseteq\L^\perp$. On the other hand, if $y\in\D\setminus\R$, there exists, by the predictable section theorem, a predictable time $\tau$ such that $E(\pp y_\tau-y_{\tau-})\ne 0$. Defining $u=-\tilde u=\delta_\tau$, we have $(u,\tilde u)\in\L$ while $\langle y,(u,\tilde u)\rangle=E(\pp y_\tau-y_{\tau-})$. Thus, $\R=\L^\perp$.

Being a closed subspace of a Fr\'echet space, $\R$ is Fr\'echet. 
Since $\M$ is isomorphic to a subspace of $\hat\M$, every $u\in\M$ defines a continuous linear functional on $\R$. On the other hand, by Hahn-Banach, a continuous linear functional on $\R$  extends to a continuous linear functional $l$ on $\D$ which, by assumption, has the expression
\[
l(y)=E\left[\int ydu+\int y_-d\tilde u\right]
\]
for some $(u,\tilde u)\in\hat\M$. On $\R$, this can be expressed as
\[
E\left[\int ydu+\int y_-d\tilde u\right] = E\left[\int ydu+\int \pp y d\tilde u\right] = E\int yd(u+\tilde u),
\]
so the dual of $\R$ can indeed be identified with $\M$. 

If $y\in\Y(C)$, we have $\op y\in\D$, by assumption, and then, by Lemma~\ref{lem:oppp},
\[
(\op y)_-=\pp (y_-)=\pp y= \pp(\op y),
\]
so $\op y\in\R$. By Lemma~\ref{lem:opm2}, the density of $L^\infty(C)$ in $\Y(C)$ and the continuity of $\pi$ imply $(\ker\pi)^\perp=\M$.

The claims about the surjectivity of $\pi$, its adjoint and the seminorms are established like in the proof of Theorem~\ref{thm:abs}.
\end{proof}


When $\Y=L^1$, Corollary~\ref{cor:L1} implies that the assumptions of Theorem~\ref{thm:abs} hold, so Theorem~\ref{thm:reg} gives the following refinement of Corollary~\ref{cor:L1}, first derived in \cite{pp17} using the main result of \cite{bis78}.

\begin{corollary}\label{cor:reg1}
The space $\R^1$ of regular processes equipped with the norm
\[
\|y\|_{\D^1}:=\sup_{\tau\in\T}E|y_\tau|
\]
is Banach and its dual can be identified with $\M^\infty$ through the bilinear form
\[
\langle y,u\rangle =E\int ydu.
\]
The optional projection is a continuous surjection of $L^1(C)$ to $\R^1$ and its adjoint is the embedding of $\M^\infty$ to $L^\infty(M)$. The topology of $\R^1$ is generated by the seminorm
\[
 p_\D(y):= \inf_{z\in L^1(C)} \{E\|z\| \mid \op z = y\}
\]
whose polar is given by
\[
 p_\D^\circ (u)= \esssup(\|u\|).
\]
\end{corollary}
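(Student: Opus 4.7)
The plan is to invoke Theorem~\ref{thm:reg} with $\Y=L^1$. This reduces the proof to (i) verifying that the hypothesis of Theorem~\ref{thm:abs} is in force in this case and (ii) transcribing the abstract conclusions of Theorem~\ref{thm:reg} into the concrete statement of the corollary.

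For (i), with $\Y=L^1$ we have the single generating seminorm $p(\xi)=E|\xi|$, so $\U=L^\infty$ and $p^\circ=\esssup$. Conditional expectation is an $L^1$-contraction, so $p$ satisfies \eqref{eq:jen0}, and $L^1$ has the Choquet property (noted after Lemma~\ref{lem:ma}). Theorem~\ref{thm:choquet}, spelled out in Corollary~\ref{cor:L1}, then furnishes the continuity and surjectivity of $\pi\colon L^1(D)\to\D^1$ and the identification of the dual of $\D^1$ with $\hat\M^\infty$ under $\langle y,(u,\tilde u)\rangle=E[\int y\,du+\int y_-\,d\tilde u]$. This is precisely the conclusion Theorem~\ref{thm:abs} requires as a hypothesis for Theorem~\ref{thm:reg}.

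For (ii), substitute $\Y=L^1$, $\U=L^\infty$, $p^\circ=\esssup$, $\Y(C)=L^1(C)$ and $\M=\M^\infty$ into Theorem~\ref{thm:reg}. Each assertion transcribes directly: $\R^1$ is Fr\'echet (hence Banach, the topology being defined by a single seminorm); its dual is $\M^\infty$ under $\langle y,u\rangle=E\int y\,du$; the optional projection is a continuous surjection of $L^1(C)$ onto $\R^1$ whose adjoint is the inclusion $\M^\infty\hookrightarrow L^\infty(M)$; the quotient seminorm reads $p_\D(y)=\inf\{E\|z\|\mid z\in L^1(C),\ \op z=y\}$; and its polar is $\esssup(\|u\|)$. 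These are exactly the formulas displayed in the corollary.

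The only point that merits a brief comment is that the corollary equips $\R^1$ with the $\D^1$-norm $\sup_{\tau\in\T}E|y_\tau|$, whereas Theorem~\ref{thm:reg} generates the topology via the quotient norm $p_\R$. Since $\R^1\subseteq\D^1$ is a closed subspace inheriting the $\D^1$-topology, and since Theorem~\ref{thm:reg} identifies this relative topology with the one induced by $p_\R$, the two norms are equivalent by the open mapping theorem, so completeness under either is automatic. No genuine obstacle arises; the corollary is a pure specialization of the machinery already built.
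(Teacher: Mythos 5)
Your proposal is correct and follows exactly the route the paper takes: Corollary~\ref{cor:L1} (i.e.\ Theorem~\ref{thm:choquet} with $\Y=L^1$) supplies the assumptions of Theorem~\ref{thm:abs}, and Theorem~\ref{thm:reg} is then specialized with $p=E|\cdot|$, $p^\circ=\esssup$, $\Y(C)=L^1(C)$, $\M=\M^\infty$. Your closing remark about reconciling the $\D^1$-norm with the quotient seminorm is fine but not needed, since Theorem~\ref{thm:reg} already asserts that the relative topology of $\R$ in $\D$ is generated by the quotient seminorms.
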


Corollary~\ref{cor:reg1} applies Theorem~\ref{thm:reg} to $\Y$ with the Choquet property. Likewise, Theorem~\ref{thm:reg} could be applied to cases when $\Y$ has the Doob property. This would cover appropriate Orlicz spaces and the Fr\'echet space of random variables with finite moments.

\section{Doob decomposition}\label{sec:dd}

We will say that an optional cadlag process $Z$ is a {\em $\U$-quasimartingale} if for some $p\in\P$,
\[
\Var_p(Z):=\sup_{(\tau_i)_{i=0}^n\subset\T}p^\circ\left(\sum_{i=0}^{n-1}|E_{\tau_i}[Z_{\tau_i}-Z_{\tau_{i+1}}]| + |Z_{\tau_n}|\right)<\infty.
\]
When $\U=L^1$ and $p^\circ(\xi)=\|\xi\|_{L^1}$, this reduces to the usual definition of a quasimartingale; see e.g.\ \cite[Definition~VI.38]{dm82}. The space of quasimartingales contains e.g.\ supermartingales and their differences. The classical Doob-decomposition expresses a quasimartingale of class $(D)$ as a sum of a martingale and a predictable process of integrable variation; see e.g.~\cite[Appendix~II.4]{dm82}. Choosing $n=0$, we see that a $\U$-quasimartingale is of class $(D)$ as soon as the level sets of some $p^\circ$ are uniformly integrable. 

We will denote by $\tilde\N^\U_0$ the linear space of predictable cadlag processes that start at $0$ and whose pathwise variation is in $\U$. The theorem below gives a refined Doob decomposition for $\U$-quasimartingales. It assumes that the seminorms satisfy the ``Jensen inequality'' \eqref{eq:jen0}.

\begin{lemma}\label{lem:jen}
If $p$ satisfies \eqref{eq:jen0}, then $p^\circ$ satisfies it as well.
\end{lemma}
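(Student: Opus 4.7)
The plan is to unfold the definition of $p^\circ$ and shift the conditional expectation onto the test function via the tower property. Concretely, for $\eta\in\U$ and $\tau\in\T$,
\[
p^\circ(E_\tau\eta) = \sup_{\xi\in L^\infty,\ p(\xi)\le 1} E(\xi\cdot E_\tau\eta),
\]
and since $E_\tau\eta$ is $\F_\tau$-measurable, the tower property gives $E(\xi\cdot E_\tau\eta) = E(E_\tau\xi\cdot\eta)$ for every $\xi\in L^\infty$.

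The next step is to recognize $E_\tau\xi$ as itself an admissible test function in the supremum defining $p^\circ(\eta)$. For this I need two things: first, that $E_\tau\xi\in L^\infty$, which is immediate because conditional expectation is an $L^\infty$-contraction; second, that $p(E_\tau\xi)\le p(\xi)\le 1$, which is precisely the assumed Jensen-type inequality \eqref{eq:jen0} applied to $\xi$ (which lies in $L^\infty\subseteq\Y$). Setting $\zeta := E_\tau\xi$ therefore gives a candidate in the defining supremum of $p^\circ(\eta)$, so $E(E_\tau\xi\cdot\eta)\le p^\circ(\eta)$. Taking the supremum over all feasible $\xi$ yields $p^\circ(E_\tau\eta)\le p^\circ(\eta)$, which is \eqref{eq:jen0} for $p^\circ$.

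There is no real obstacle beyond making sure the admissibility check in the second step is stated cleanly; the whole proof is a two-line rearrangement once one has observed that moving $E_\tau$ across the pairing turns the hypothesis on $p$ into the conclusion for $p^\circ$ by enlarging the feasible set of the polar supremum.
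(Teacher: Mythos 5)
Your proposal is correct and follows essentially the same route as the paper's proof: unfold the polar, move $E_\tau$ across the pairing by the tower property, and use \eqref{eq:jen0} (together with the $L^\infty$-contraction property of conditional expectation) to see that $E_\tau\xi$ is feasible in the supremum defining $p^\circ(\eta)$. The only difference is cosmetic — you merge the paper's two displayed inequalities into a single admissibility check.
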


\begin{proof}
By properties of the conditional expectation,
\begin{align*}
  p^\circ(E_\tau\eta) &=\sup_{\xi\in L^\infty}\{E[\xi E_\tau\eta]\mid p(\xi)\le 1\} =\sup_{\xi\in L^\infty}\{E[\eta E_\tau\xi]\mid p(\xi)\le 1\}\\
  &\le \sup_{\xi\in L^\infty}\{E[\eta E_\tau\xi]\mid p(E_\tau\xi)\le 1\} \le \sup_{\xi\in L^\infty}\{E[\eta \xi]\mid p(\xi)\le 1\} = p^\circ(\eta),
\end{align*}
for any $\tau\in\T$.
\end{proof}

\begin{theorem}\label{thm:dt}
Assume that the conditions of Theorem~\ref{thm:abs} are satisfied, that the seminorms $p\in\P$ satisfy \eqref{eq:jen0} and that the level sets of some $p^\circ$ are uniformly integrable. Then a process $Z$ is a $\U$-quasimartingale if and only if there exists a $\U$-martingale $M$ and an $A\in\tilde\N^\U_0$ such that 
\[
Z_t = M_t-A_t.
\]
\end{theorem}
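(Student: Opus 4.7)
My plan is to treat $(\Leftarrow)$ by a direct pairing computation in the spirit of Theorem~\ref{thm:raw}, and $(\Rightarrow)$ by reducing to classical Rao decomposition and then upgrading the conclusion from $L^1$ to $\U$ by invoking Theorem~\ref{thm:abs}.

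For $(\Leftarrow)$, assume $Z = M - A$ with $M$ a $\U$-martingale and $A\in\tilde\N^\U_0$ of pathwise total variation $V_\infty\in\U$. The martingale property gives $E_{\tau_i}[Z_{\tau_i}-Z_{\tau_{i+1}}] = E_{\tau_i}[A_{\tau_{i+1}}-A_{\tau_i}]$. For any $\xi\in L^\infty_+$ with $p(\xi)\le 1$, set $\epsilon_i := \operatorname{sign}(E_{\tau_i}[Z_{\tau_i}-Z_{\tau_{i+1}}])\,\xi\in\F_{\tau_i}$ and introduce the bounded adapted càdlàg step process $z_t := \epsilon_i$ on $[\tau_i,\tau_{i+1})$, so that $\|z\|\le\xi$. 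Decomposing the predictable measure $dA = dA^c + dA^d$ into its continuous and purely discontinuous parts, $(dA^c, dA^d) \in \hat\M$ with $\|dA^c\| + \|dA^d\| = V_\infty$, and
\[
E\Bigl[\xi \sum_i |E_{\tau_i}[Z_{\tau_i}-Z_{\tau_{i+1}}]|\Bigr] = E\Bigl[\sum_i \epsilon_i(A_{\tau_{i+1}}-A_{\tau_i})\Bigr] = E\int z_-\,dA = \langle z,(dA^c,dA^d)\rangle,
\]
which Theorem~\ref{thm:raw} bounds by $p(\|z\|)\,p^\circ(V_\infty)\le p^\circ(V_\infty)$. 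Supremising over $\xi$ and handling $|Z_{\tau_n}|\le E_{\tau_n}|M_\infty| + V_\infty$ via Lemma~\ref{lem:jen} yields $\Var_p(Z) \le 2p^\circ(V_\infty) + p^\circ(|M_\infty|) < \infty$.

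For $(\Rightarrow)$, the $n=0$ case in the definition of $\Var_p$ gives $\sup_{\tau\in\T}p^\circ(|Z_\tau|) \le \Var_p(Z) < \infty$; uniform integrability of the $p^\circ$-level sets then forces $\{Z_\tau\}$ to be uniformly integrable, so $Z$ is of class $(D)$. Since $p^\circ$ dominates the $L^1$-norm (property~2), $Z$ is also an $L^1$-quasimartingale of class $(D)$, and classical Rao decomposition produces $Z = M - A$ with $M$ a uniformly integrable martingale and $A$ predictable càdlàg of integrable variation, $A_0 = 0$. The remaining task is to show $\|A\|_\infty \in\U$. To this end, define $J(y) := E\int y_-\,dA$ on $\D^\infty$. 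Because $\int y_-\,dM$ is a uniformly integrable martingale starting at $0$, a Riemann-sum approximation gives $J(y) = \lim_\pi E[\sum_i y_{\tau_i}(Z_{\tau_i}-Z_{\tau_{i+1}})]$. For any $z\in\Y(D)$ with $\op z = y$, the optional projection satisfies $y_{\tau_i} = E_{\tau_i}[z_{\tau_i}]$ at the stopping times $\tau_i$, so each summand rewrites as $E[z_{\tau_i}E_{\tau_i}[Z_{\tau_i}-Z_{\tau_{i+1}}]]$ and
\[
\Bigl|\sum_i E[y_{\tau_i}(Z_{\tau_i}-Z_{\tau_{i+1}})]\Bigr| \le E\Bigl[\|z\|\sum_i|E_{\tau_i}[Z_{\tau_i}-Z_{\tau_{i+1}}]|\Bigr] \le p(\|z\|)\,\Var_p(Z).
\]
Infimising over $z$ gives $|J(y)| \le p_\D(y)\,\Var_p(Z)$, and by Hahn-Banach $J$ extends to a continuous linear functional on $\D$, represented by some $(u^*,\tilde u^*)\in\hat\M$ via Theorem~\ref{thm:abs}.

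The principal difficulty is identifying $(u^*,\tilde u^*)$ with the pair $(dA^c,dA^d)$ associated to $A$, which a priori lies only in $L^1(M\times\tilde M)$. Both pairs represent $J$ on $\D^\infty$, and both have an optional first component and a predictable purely discontinuous second component. Their difference therefore satisfies the same orthogonality as in Lemma~\ref{lem:opm} and annihilates $\D^\infty$. Since each $y\in L^\infty(D)$ decomposes as $(y-\op y) + \op y$ with the first summand in $\ker\pi\cap L^\infty(D)$ (orthogonal to the $L^1$-analogue of $\hat\M$) and the second in $\D^\infty$, the difference must annihilate all of $L^\infty(D)$ and hence vanish. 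Thus $\|A\|_\infty = \|u^*\| + \|\tilde u^*\| \in\U$, so $A\in\tilde\N^\U_0$ and the decomposition lies in the claimed class.
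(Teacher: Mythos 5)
Your argument is correct in substance, but your ``only if'' direction takes a genuinely different route from the paper. The paper never invokes the classical Rao/Doob--Meyer decomposition: it defines a linear functional directly on simple processes by the sums appearing in $\Var_p(Z)$, shows it is dominated by $\Var_p(Z)\,p_\D$, extends it by Hahn--Banach, represents the extension by some $(u,\tilde u)\in\hat\M$ via Theorem~\ref{thm:abs}, proves that $u$ charges no stopping time (hence is predictable), and then \emph{constructs} $A$ from $\bar u=u+\tilde u$ and $M_t=E[A_\infty\mid\F_t]$, verifying $Z=M-A$ by testing against $\one_{[\tau,\infty)}$ and the restricted times $\tau_B$. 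You instead take the classical decomposition $Z=M-A$ as an input and use the duality only to upgrade the integrability of $A$, identifying the Hahn--Banach representative $(u^*,\tilde u^*)\in\hat\M$ of $y\mapsto E\int y_-dA$ with $(dA^c,dA^d)$. Your identification is sound: the difference of the two pairs has optional first and predictable purely discontinuous second component, so by Lemma~\ref{lem:oppp} it annihilates $\ker\pi\cap L^\infty(D)$, hence (via $y=(y-\op y)+\op y$) all of $L^\infty(D)$, and then \eqref{eq:pptv}, which holds on all of $L^1(M\times\tilde M)$, together with property~2 of $p^\circ$ forces it to vanish. What your route buys is brevity, at the price of importing Rao's theorem; the paper's route is self-contained and in effect reproves the decomposition in the $\U$-setting, so the classical statement becomes a corollary rather than a hypothesis. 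Your ``if'' direction is essentially the paper's estimate, organized through the pairing of Theorem~\ref{thm:raw} instead of a direct conditional-Jensen bound; both work.

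Two patches are needed. First, your justification of the Riemann-sum identity through ``$\int y_-dM$ is a uniformly integrable martingale'' is neither needed nor true in general for a merely uniformly integrable $M$; what you actually use, and what suffices, is $E[y_{\tau_i}(M_{\tau_{i+1}}-M_{\tau_i})]=0$ by optional sampling for each increment, plus dominated convergence for the $A$-part (the simple predictable approximants converge pointwise to $y_-$, are bounded by $\|y\|_{L^\infty}$, and $E\|A\|_{TV}<\infty$). Second, you declare that ``the remaining task is to show'' the variation of $A$ lies in $\U$ and stop there, but the statement also asserts that $M$ is a $\U$-martingale; this must be checked, though it is immediate: $M_T=Z_T+A_T$ with $p^\circ(|Z_T|)\le\Var_p(Z)$ (the $n=0$ case) and $|A_T|\le\|A\|_{TV}\in\U$, and Lemma~\ref{lem:jen} then gives $p^\circ(M_t)=p^\circ(E_tM_T)\le p^\circ(M_T)<\infty$. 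With these repairs the proof stands.
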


\begin{proof}
If $Z=M-A$ for a $\U$-martingale $M$ and $A\in\tilde\N^\U_0$, then the monotonicity of $p^\circ$, the Jensen's inequality with $|\cdot|$ and Lemma~\ref{lem:jen} give
\begin{align*}
  \Var_p(Z) &=\sup_{(\tau_i)_{i=0}^n\subset\T}p^\circ\left(\sum_{i=0}^{n-1}|E_{\tau_i}[A_{\tau_i}-A_{\tau_{i+1}}]| + |M_{\tau_n}-A_{\tau_n}|\right)\\
  &\le\sup_{(\tau_i)_{i=0}^n\subset\T}p^\circ\left(\sum_{i=0}^{n-1}E_{\tau_i}|A_{\tau_i}-A_{\tau_{i+1}}| + |M_{\tau_n}|+|A_{\tau_n}|\right)\\
&\le p^\circ(2\|A\|_{TV}+|M_{\tau_n}|) \\
&\le p^\circ(2\|A\|_{TV})+ p^\circ(M_\infty)<\infty,
\end{align*}
where $\|A\|_{TV}$ denotes the total variation of $A$. On the other hand, let $\D_s\subset\D$ be the space of {\em simple processes} of the form
\[
y = \sum_{i=0}^n\one_{[\tau_i,\tau_{i+1})}\eta^i,
\]
where $(\tau_i)_{i=0}^n$ is an increasing sequence of stopping times with $\tau_0=0$ and $\tau_{n+1}=\infty$ and $\eta^i\in L^\infty(\F_{\tau_i})$. Define a linear functional $l$ on $\D_s$ by
\[
l(y) = E\left[\sum_{i=0}^{n-1}y_{\tau_i}E_{\tau_i}[Z_{\tau_i}-Z_{\tau_{i+1}}] + y_{\tau_n}E_{\tau_n}Z_{\tau_n}\right].
\]
Given $\bar y\in\D_s$, 
\begin{align*}
  l(\bar y) &= E\int \bar yd\bar u^{(\tau_i)},
\end{align*}
where the measure $\bar u^{(\tau_i)}$ is given by
\[
\bar u^{(\tau_i)}:=\sum_{i=0}^nE_{\tau_i}[Z_{\tau_i}-Z_{\tau_{i+1}}]\delta_{\tau_i} + E_{\tau_n}Z_{\tau_n}\delta_{\tau_n}.
\]
Thus, by Theorem~\ref{thm:abs},
\[
l(\bar y) \le p_{\D}(\bar y)p^\circ(\|\bar u^{(\tau_i)}\|) \le p_{\D}(\bar y)\Var_p(Z)
\]
so, $l$ is continuous in the relative topology of $\D_s$. By Hahn--Banach, $l$ extends to all of $\D$ so by, Theorem~\ref{thm:abs}, there exists $(u,\tilde u)\in\hat\M$ such that
\[
l(y)=E\left[\int ydu + \int y_-d\tilde u\right].
\]

Given $\tau\in\T$, let $y^\nu=\one_{[\tau,\tau+1/\nu)}\in\D_s$. Since $Z$ is cadlag and of class $(D)$, we have $l(y^\nu)=E(Z_{\tau}-Z_{\tau+1/\nu})\to 0$. On the other hand,
\[
l(y^\nu)=E\left[\int y^\nu du + \int y^\nu_-d\tilde u\right]=E[u([\tau,\tau+1/\nu)) + \tilde u((\tau,\tau+1/\nu])] \to Eu(\{\tau\})
\]
so $Eu(\{\tau\})=0$ for every $\tau\in\T$. Thus the purely discontinuous part of $u$ is zero and, in particular, $u$ is predictable. We can thus express $l$ in terms of the predictable measure $\bar u:=u+\tilde u$ as
\[
l(y)=\int y_-d\bar u.
\]
It now suffices to take $A_t=u((0,t])$ and $M_t=E[A_\infty\,|\,\F_t]$. Indeed, taking $y=\one_{[\tau,\infty)}\in\D_s$, gives
\[
E(Z_\tau\one_{\{\tau<\infty\}}) = E(A_\infty-A_\tau).
\]
Taking $\tau=\tau_B$ for $B\in\F_\tau$ gives $Z_\tau=E[A_\infty-A_\tau\,|\,\F_\tau]$. Here $\tau_B :=\tau$ on $B$ and $+\infty$ otherwise. Finally, Lemma~\ref{lem:jen} gives $p^\circ(M_t)=p^\circ(E_tA_\infty)\le p^\circ(A_\infty)$, so $M$ is a $\U$-martingale.
\end{proof}

\bibliographystyle{plain}
\bibliography{sp}

\end{document}